\newtheorem{proposition}{Proposition}[section]
\newtheorem{theorem}[proposition]{Theorem}
\newtheorem{corollary}[proposition]{Corollary}
\newtheorem{example}[proposition]{Example}
\newtheorem*{theorem*}{Theorem}
\newtheorem*{proposition*}{Proposition}
\newtheorem*{lemma*}{Lemma}
\newtheorem*{corollary*}{Corollary}
\newtheorem*{rep@theorem}{\rep@title}
\newcommand{\newreptheorem}[2]{
\newenvironment{rep#1}[1]{
 \def\rep@title{#2 \ref{##1}}
 \begin{rep@theorem}}
 {\end{rep@theorem}}}
\theoremstyle{definition}
\newtheorem{definition}[proposition]{Definition}
\newcommand{\V}{\mathbb{V}}
\newcommand{\x}{\times}
\newcommand\myoverset[2]{\overset{\textstyle #1\mathstrut}{#2}}
\newcommand{\meq}[2][0mm]{\raisebox{-.88mm}{\hspace{#1}\ensuremath{#2}\hspace{#1}}} 
\newcommand{\tile}[2][1]{\begin{mosaic}[#1] #2 \\ \end{mosaic}}
   \foreach \x [count=\y] in #4
   \foreach \x [count=\y] in #5
   \foreach \x [count=\y] in #6
   \foreach \x [count=\y] in #7
\newcommand{\cspace}{.1} 
\newcommand{\tileedge}{.05pt} 
\newcommand{\knotthickness}{1.5pt} 
\newcommand{\tileo}{\tikz \draw[tilestyle] (0,0) rectangle (\tilesize,\tilesize);}
\newcommand{\tilei}[1][{}]{\tikz {\draw[tilestyle] (0,0) rectangle (\tilesize,\tilesize); \draw[#1,knotstyle={\tilesize*\knotthickness}] (0,.5*\tilesize) to[out=0, in=90] (.5*\tilesize,0);}}
\newcommand{\tileii}[1][{}]{\tikz {\draw[tilestyle] (0,0) rectangle (\tilesize,\tilesize); \draw[#1,knotstyle={\tilesize*\knotthickness}] (\tilesize,.5*\tilesize) to[out=180, in=90] (.5*\tilesize,0);}}
\newcommand{\tileiii}[1][{}]{\tikz {\draw[tilestyle] (0,0) rectangle (\tilesize,\tilesize); \draw[#1,knotstyle={\tilesize*\knotthickness}] (\tilesize,.5*\tilesize) to[out=180, in=-90] (.5*\tilesize,\tilesize);}}
\newcommand{\tileiv}[1][{}]{\tikz {\draw[tilestyle] (0,0) rectangle (\tilesize,\tilesize); \draw[#1,knotstyle={\tilesize*\knotthickness}] (0,.5*\tilesize) to[out=0, in=-90] (.5*\tilesize,\tilesize);}}
\newcommand{\tilev}[1][{}]{\tikz {\draw[tilestyle] (0,0) rectangle (\tilesize,\tilesize); \draw[#1,knotstyle={\tilesize*\knotthickness}] (0,.5*\tilesize) to (\tilesize,.5*\tilesize);}}
\newcommand{\tilevi}[1][{}]{\tikz {\draw[tilestyle] (0,0) rectangle (\tilesize,\tilesize); \draw[#1,knotstyle={\tilesize*\knotthickness}] (.5*\tilesize,0) to (.5*\tilesize,\tilesize);}}
\newcommand{\tilevii}[1][{}]{\tikz {\draw[tilestyle] (0,0) rectangle (\tilesize,\tilesize); \draw[#1,knotstyle={\tilesize*\knotthickness}] (0,.5*\tilesize) to[out=0, in=90] (.5*\tilesize,0); \draw[#1,knotstyle={\tilesize*\knotthickness}] (\tilesize,.5*\tilesize) to[out=180, in=-90] (.5*\tilesize,\tilesize);}}
\newcommand{\tileviii}[1][{}]{\tikz {\draw[tilestyle] (0,0) rectangle (\tilesize,\tilesize); \draw[#1,knotstyle={\tilesize*\knotthickness}] (\tilesize,.5*\tilesize) to[out=180, in=90] (.5*\tilesize,0); \draw[#1,knotstyle={\tilesize*\knotthickness}] (0,.5*\tilesize) to[out=0, in=-90] (.5*\tilesize,\tilesize);}}
\newcommand{\tileix}[1][{}]{\tikz {\draw[tilestyle] (0,0) rectangle (\tilesize,\tilesize); \draw[#1,knotstyle={\tilesize*\knotthickness}] (0,.5*\tilesize) to (\tilesize,.5*\tilesize); \draw[#1,knotstyle={\tilesize*\knotthickness}] (.5*\tilesize,0) to (.5*\tilesize,.5*\tilesize-\tilesize*\cspace); \draw[#1,knotstyle={\tilesize*\knotthickness}] (.5*\tilesize,.5*\tilesize+\tilesize*\cspace) to (.5*\tilesize,\tilesize);}}
\newcommand{\tilex}[1][{}]{\tikz {\draw[tilestyle] (0,0) rectangle (\tilesize,\tilesize); \draw[#1,knotstyle={\tilesize*\knotthickness}] (.5*\tilesize,0) to (.5*\tilesize,\tilesize); \draw[#1,knotstyle={\tilesize*\knotthickness}] (0,.5*\tilesize) to (.5*\tilesize-\tilesize*\cspace,.5*\tilesize); \draw[#1,knotstyle={\tilesize*\knotthickness}] (.5*\tilesize+\tilesize*\cspace,.5*\tilesize) to (\tilesize,.5*\tilesize);}}
\tikzset{tilestyle/.style={use as bounding box, line width=\tileedge}} 
\tikzset{knotstyle/.style={line width=#1}} 
\tikzset{mosai/.style={execute at begin cell=\node\bgroup, execute at end cell=\egroup;, column sep=0mm, inner sep=0}}
\newcommand{\tilefit}{.7} 
\newcommand{\tilepix}[1][{}]{\tikz {\draw[tilestyle] (0,0) rectangle (\tilesize,\tilesize); \draw[#1->,knotstyle={\tilesize*\knotthickness}] (0,.5*\tilesize) to (\tilesize,.5*\tilesize); \draw[#1,knotstyle={\tilesize*\knotthickness}] (.5*\tilesize,0) to (.5*\tilesize,.5*\tilesize-\tilesize*\cspace); \draw[#1->,knotstyle={\tilesize*\knotthickness}] (.5*\tilesize,.5*\tilesize+\tilesize*\cspace) to (.5*\tilesize,\tilesize);}}
\newcommand{\tilenix}[1][{}]{\tikz {\draw[tilestyle] (0,0) rectangle (\tilesize,\tilesize); \draw[#1->,knotstyle={\tilesize*\knotthickness}] (0,.5*\tilesize) to (\tilesize,.5*\tilesize); \draw[#1->,knotstyle={\tilesize*\knotthickness}] (.5*\tilesize,.5*\tilesize-\tilesize*\cspace) to (.5*\tilesize,0) ; \draw[#1,knotstyle={\tilesize*\knotthickness}] (.5*\tilesize,.5*\tilesize+\tilesize*\cspace) to (.5*\tilesize,\tilesize);}}
\newcommand{\tilepx}[1][{}]{\tikz {\draw[tilestyle] (0,0) rectangle (\tilesize,\tilesize); \draw[#1->,knotstyle={\tilesize*\knotthickness}] (.5*\tilesize,\tilesize) to (.5*\tilesize,0); 
\draw[#1,knotstyle={\tilesize*\knotthickness}] (0,.5*\tilesize) to (.5*\tilesize-\tilesize*\cspace,.5*\tilesize);
\draw[#1->,knotstyle={\tilesize*\knotthickness}] (.5*\tilesize+\tilesize*\cspace,.5*\tilesize) to (\tilesize,.5*\tilesize);}}
\newcommand{\tilenx}[1][{}]{\tikz {\draw[tilestyle] (0,0) rectangle (\tilesize,\tilesize); \draw[#1->,knotstyle={\tilesize*\knotthickness}] (.5*\tilesize,0) to (.5*\tilesize,\tilesize); 
\draw[#1,knotstyle={\tilesize*\knotthickness}] (0,.5*\tilesize) to (.5*\tilesize-\tilesize*\cspace,.5*\tilesize);
\draw[#1->,knotstyle={\tilesize*\knotthickness}] (.5*\tilesize+\tilesize*\cspace,.5*\tilesize) to (\tilesize,.5*\tilesize);}}
\newcommand{\tileviio}[1][{}]{\tikz {\draw[tilestyle] (0,0) rectangle (\tilesize,\tilesize); \draw[#1->,knotstyle={\tilesize*\knotthickness}] (0,.5*\tilesize) to[out=0, in=90] (.5*\tilesize,0); \draw[#1->,knotstyle={\tilesize*\knotthickness}] (.5*\tilesize,\tilesize) to[out=-90, in=180] (\tilesize,.5*\tilesize);}}
\newcommand{\tileviiio}[1][{}]{\tikz {\draw[tilestyle] (0,0) rectangle (\tilesize,\tilesize); \draw[#1->,knotstyle={\tilesize*\knotthickness}] (.5*\tilesize,0) to[out=90, in=180] (\tilesize,.5*\tilesize); \draw[#1->,knotstyle={\tilesize*\knotthickness}] (0,.5*\tilesize) to[out=0, in=-90] (.5*\tilesize,\tilesize);}}
\newcommand{\tileviioa}[1][{}]{\tikz {\draw[tilestyle] (0,0) rectangle (\tilesize,\tilesize); \draw[red, ->,knotstyle={\tilesize*\knotthickness}] (0,.5*\tilesize) to[out=0, in=90] (.5*\tilesize,0); \draw[blue,->,knotstyle={\tilesize*\knotthickness}] (.5*\tilesize,\tilesize) to[out=-90, in=180] (\tilesize,.5*\tilesize);}}
\newcommand{\tilenxa}[1][{}]{\tikz {\draw[tilestyle] (0,0) rectangle (\tilesize,\tilesize); \draw[red,->,knotstyle={\tilesize*\knotthickness}] (.5*\tilesize,0) to (.5*\tilesize,\tilesize); 
\draw[blue,knotstyle={\tilesize*\knotthickness}] (0,.5*\tilesize) to (.5*\tilesize-\tilesize*\cspace,.5*\tilesize);
\draw[blue,->,knotstyle={\tilesize*\knotthickness}] (.5*\tilesize+\tilesize*\cspace,.5*\tilesize) to (\tilesize,.5*\tilesize);}}
\begin{document}
\title{Rectangular Mosaics for Virtual Knots}

\author{Taylor Martin}
\address{Department of Mathematics, Sam Houston State University}
\email{taylor.martin@shsu.edu}

\author{Rachel Meyers}
\address{Department of Mathematics, Louisiana State University}
\email{rmeye23@lsu.edu}

\subjclass[2000]{57M25}

\begin{abstract} 
Mosaic knots, first introduced in 2008 by Lomanoco and Kauffman, have become a useful tool for studying combinatorial invariants of knots and links. In 2020, by considering knot mosaics on $n \times n$ polygons with boundary edge identification, Ganzell and Henrich extended the study of mosaic knots to include virtual knots - knots embedded in thickened surfaces. They also provided a set of virtual mosaic moves preserving knot and link type. In this paper, we introduce rectangular mosaics for virtual knots, defined to be $m \times n$ arrays of classical knot mosaic tiles, along with an edge identification of the boundary of the mosaic, whose closures produce virtual knots. We modify Ganzell and Henrich's mosaic moves to the rectangular setting, provide several invariants of virtual rectangular mosaics, and give algorithms for computations of common virtual knot invariants.  
\end{abstract}

\maketitle

\section{Introduction}

Mosaic knots, first introduced in 2008 by Lomanoco and Kauffman \cite{LK}, seek to lend structure to the study of knot theory and were initially introduced as a means of developing a quantum knot system. Subsequent work of Kuriya and Shehab \cite{KS} verified that mosaic knot theory is equivalent to classical knot theory. Mosaic knot theory constructs knot projections out of \emph{suitably connected tiles} by arranging the tiles depicted in Figure \ref{tiles} into a square array in such a way that the resulting grid contains a knot or link projection. Knot mosaics have become a rich arena for student research in knot theory. Many of the results focus on producing efficient mosaics by minimizing \emph{mosaic number}, the minimal $n$ such that a given knot $K$ can be represented on an $n \x n$ mosaic, or the \emph{tile number}, the minimal number of non-blank tiles needed to represent a given knot as a mosaic \cite{Ludwig,Lee}. For low crossing numbers, efficient mosaics have been tabulated \cite{Heap1, Heap2}. Others have explored hexagonal tiles, tiles with corner crossings, and mosaics for graph diagrams.  \cite{how, Choi, Heap3}   

\begin{figure}[htp]
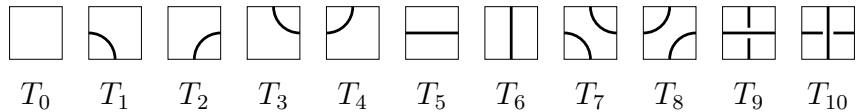

\centering
\setcounter{MaxMatrixCols}{20}$\begin{matrix} 
    \tile[\tilefit]{\tileo} & \tile[\tilefit]{\tilei} & \tile[\tilefit]{\tileii} & \tile[\tilefit]{\tileiii} & \tile[\tilefit]{\tileiv} & \tile[\tilefit]{\tilev} & \tile[\tilefit]{\tilevi} & \tile[\tilefit]{\tilevii} & \tile[\tilefit]{\tileviii} & \tile[\tilefit]{\tileix} & \tile[\tilefit]{\tilex} \\[2.5ex]
    T_0 & T_1 & T_2 & T_3 & T_4 & T_5 & T_6 & T_7 & T_8 & T_9 & T_{10}
\end{matrix}$\captionsetup{labelfont={sc}}
\caption{Mosaic tiles.}
\label{tiles}
\end{figure}

In 2020, Ganzell and Henrich \cite{GH} extended the study of knot mosaics to include virtual knots. By introducing an edge pairing on the $4n$ boundary edges of an $n \x n$ knot mosaic, one can produce a knot diagram $D$ on a closed, orientable surface $\Sigma_D$; this is one definition of a virtual knot. We refer to the quotient of the knot mosaic modulo its boundary edge identification as the {\it closure} of the virtual mosaic. The \emph{genus} of the diagram $D$ is defined to be the genus of the closed surface $\Sigma_D$. 

Note that this definition allows for tiles in a virtual knot mosaic to extend ``off the grid." If a virtual knot mosaic diagram contains two arcs that each extend across the boundary of the mosaic and if the edge labeling corresponding to those arcs form am \emph{interlocking pair} of boundary edges, as pictured in Figure \ref{crosspair}, the corresponding virtual knot will contain a \emph{virtual crossing}. 

The genus of a virtual mosaic can be determined by applying the classification of surfaces, and in particular, by counting the number of distinct vertices $v_D$ of the mosaic as created by the boundary identification. For an $n \x n$ virtual mosaic for a diagram $D$ with $v_D$ vertices,  $$g(D) = \frac{1 - v_D + 2n}{2}.$$ Then, the \emph{genus} of a virtual knot is defined to be the minimal genus $g(D)$ across any virtual mosaic $D$ representing $K$.  

\begin{figure}[H]
    \begin{tikzpicture}
    \hspace{-1.1in}
  $\alpha_i \cdots \alpha_j \cdots \alpha_i \cdots \alpha_j$
  \draw[line width=\tileedge] (-2,.45) to[out=90, in=90, looseness=1.4] (-.25,.45);
  \draw[line width=\tileedge] (-3,.45) to[out=90, in=90, looseness=1.4] (-1.25,.45);
  \draw[thin] (-1.62,1.05) circle (1.4mm);
\end{tikzpicture}
\vspace{.4 in}
\caption{An interlocking set of boundary edge pairs creates a virtual crossing}
    \label{crosspair}
\end{figure}
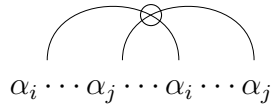

\begin{example}
    Figure \ref{tref}, from \cite{GH} shows a $2 \x 2 $ virtual mosaic with a boundary edge pairing, read counterclockwise from the northwest corner of the grid as the word $dcdcabba$ with an interlocking pair $dcdc$. There are 3 distinct vertices in the mosaic and the closure has genus 1. 

\begin{figure}[h]
\centering
\includegraphics[scale=.9]{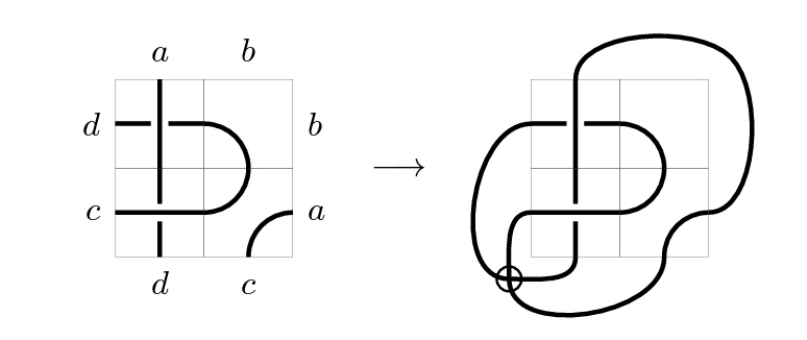}
\put(-310,120){\textcolor{red}{$v_1$}}
\put(-320,70){\textcolor{red}{$v_1$}}
\put(-310,30){\textcolor{red}{$v_1$}}
\put(-270,120){\textcolor{red}{$v_2$}}
\put(-270,25){\textcolor{red}{$v_1$}}
\put(-320,70){\textcolor{red}{$v_1$}}
\put(-220,70){\textcolor{red}{$v_2$}}
\put(-220,25){\textcolor{red}{$v_1$}}
\put(-220,120){\textcolor{red}{$v_3$}}

\caption{A $2 \x 2$ virtual mosaic with an edge pairing and its virtual knot closure.}
\label{tref}
\end{figure}
\end{example}
In this paper, we extend the study of virtual mosaic knot theory by introducing the {\it rectangular mosaic}. By considering rectangular mosaics, we can produce more space-efficient virtual mosaics for virtual and classical knots. 

\begin{definition}
    A {\it rectangular virtual knot mosaic} is an $m \x n$ array of mosaic tiles, along with an edge pairing of the $2(m+n)$ boundary edges of the array, such that the closure of the array yields a virtual knot diagram $D$ on a closed, orientable surface $\Sigma_D$. The \emph{genus} of the mosaic is defined to be the genus of $\Sigma_D$ and denoted $g(D)$. 
\end{definition}

Again, the genus of a rectangular $n \x m$ virtual knot mosaic $D$ can be determined by counting the number of vertices $v_D$ coming from the edge pairing of the mosaic and appealing to the classification of surfaces. $$g(D) = \frac{1-v_D + n + m}{2}.$$

In section \ref{sec:2}, we adapt Ganzell and Henrich's set of {\it virtual mosaic moves} to the new setting of rectangular mosaics, providing a modification for their injection move to both correct an error and adapt these moves to the rectangular case. We introduce new rectangular mosaic moves called \emph{row and column injection and ejection} that preserve virtual knot type. 

In section \ref{sec:3}, we introduce the \emph{tile number} of a virtual knot and define the {\it virtual row mosaic}, a $1 \x n$ virtual rectangular mosaic for a virtual knot or link. We prove that every virtual knot or link has a virtual row mosaic. We define the \emph{row number} invariant for a virtual knot and we prove results relating these invariants of virtual rectangular mosaics. 

In section \ref{sec:4}, we discuss virtual knot polynomials in the setting of virtual row mosaics. In section \ref{sec:5} we pose several questions for future study. We conclude with three tables. Table 1 includes all classical knots up to 8 crossings presented in mosaics realizing tile numbers. Table 2 depicts all virtual knots with up to 4 classical crossings realizing their tile numbers. Table 3 shows all alternating knots of up to 8 crossings presented on $2 \x 3 $ or $2 \x 4$ rectangular mosaics.

\section{Rectangular Mosaics and Virtual Mosaic Moves}\label{sec:2}

Much of the study of mosaic knots involves computing the \emph{mosaic number} $m(K)$ of a knot; given a knot $K$, find the minimal $n$ such that $K$ can be represented on an $n\times n$ mosaic. Ganzell and Henrich \cite{GH} focus on the \emph{virtual mosaic number} $m_v(K)$ of a knot, which is the minimal value of $n$ for which $K$ can be represented on an $n \times n$ virtual mosaic, and they show that for $K$ a classical knot, $m_v(K) \le m(K)-2$. By extending the definition of a virtual mosaic to allow for non-square tilings, we show that the total number of tiles can, in many cases, be further reduced. 

\begin{definition}
A \emph{virtual rectangular mosaic} is an $m \x n$ array of standard mosaic tiles, along with a pairing of the $2m+2n$ edges of the boundary of the array such that the closure of the mosaic produces a knot or link diagram on a compact, orientable surface.
\end{definition}


\begin{example}
Figure \ref{fig:Vmosaic Closure} shows an example of the $7_1$ knot as a $1 \x 7$ virtual mosaic as well as its closure. 
\begin{center}
\begin{figure}
    \begin{vmosaic}[1]{1}{7}{{d,c,c,b,b,a,a}}{{d}}{{e,h,h,g,g,f,f}}{{e}} 
    \tileix \& \tilex \& \tileix \& \tilex \& \tileix \& \tilex \& \tileix \\
    \end{vmosaic}
\vspace{.3 in}\\
\begin{tikzpicture}[baseline]
\hspace{-2 in}
\begin{vmosaic}[1]{1}{7}{{}}{{}}{{}}{{}} 
    \tileix \& \tilex \& \tileix \& \tilex \& \tileix \& \tilex \& \tileix \\
    \end{vmosaic}

\draw[line width=\knotthickness] (-5.5,.45) to[out=90, in=90, looseness=1.4] (-4.5,.45);
\draw[line width=\knotthickness] (-6.5, .45) to[out=90, in=90, looseness=.8] (.5,.5) to[out=-90, in=0, looseness=.8] (0,0);
\draw[line width=\knotthickness] (-3.5,.45) to[out=90, in=90, looseness=1.4] (-2.5,.45);
\draw[line width=\knotthickness] (-1.5,.45) to[out=90, in=90, looseness=1.4] (-.5,.45);
\draw[line width=\knotthickness] (-6.5,-.45) to[out=-90, in=-90, looseness=1.4] (-5.5,-.45);
\draw[line width=\knotthickness] (-4.5,-.45) to[out=-90, in=-90, looseness=1.4] (-3.5,-.45);
\draw[line width=\knotthickness] (-2.5,-.45) to[out=-90, in=-90, looseness=1.4] (-1.5,-.45);
\draw[line width=\knotthickness] (-7,0) to[out=180, in=90, looseness=.8] (-7.5,-.5) to[out=-90, in=-90, looseness=.8] (-.5,-.45)  ;
\end{tikzpicture}
\caption{The $7_1$ knot as a $1 \x 7$ virtual mosaic and its closure}
\label{fig:Vmosaic Closure}
\end{figure}
\end{center}
\end{example}

Ganzell and Henrich introduced a collection of \emph{virtual mosaic moves} on square mosaics that preserve knot type \cite{GH}. These moves are categorized as \emph{planar isotopies}, \emph{Reidemeister moves}, \emph{surface isotopies}, \emph{stabilization/destabilization}, and \emph{injection/ejection}. We refer the reader to \cite{GH} for these moves. Each of these moves apply in the rectangular case, and a straightforward adaptation of the injection and ejection moves gives a generalization of virtual mosaic moves to rectangular mosaics.

\subsection{Injection and Ejection}

Ganzell and Henrich's injection move and its inverse, the ejection move, will expand or contract the mosaic by adding or deleting two rows and/or columns. Below, we redefine the injection move using a relabeling system that allows us to give a well-defined injection function at any position along a square mosaic. We will then expand this definition to allow for rectangular injection and ejection. Let $\mathbb{V}^{(nm)}$ denote the set of $n \x m$ virtual rectangular mosaics. Let $V^{(nm)}$ denote a particular element of $\V^{(nm)}$ and denote the $ij^{th}$ entry of $V^{(nm)}$ by $V_{ij}^{(nm)}$.

\begin{definition}
A \emph{square virtual mosaic injection} is defined by the following function $\iota:~\V^{(nm)} \rightarrow \V^{(n+2,m+2)}$ given by:
\[
V_{ij}^{(n+2,m+2)}=
\begin{cases*}
V_{ij}^{(n)} & if $i \neq \alpha_1, \alpha_2, j \neq \beta_1, \beta_2$\\
T_5 & if $j = \beta_1, \beta_2$ and $V_{i,j^+-1}^{(n)}$ or $V_{i,j^+}^{(n)} \in\{T_2,T_3,T_5,T_7,T_8,T_9,T_{10}\}$ \\
T_6 & if $i = \alpha_1,\alpha_2$, and $V_{i^+-1,j}^{(n)}$ or  $V_{i^+,j}^{(n)}\in\{T_1,T_2,T_6,T_7,T_8,T_9,T_{10}\}$ \\
T_0 & $\vphantom{V_i^{(n)}}$otherwise.
\end{cases*}
\]
Here, the new rows created in the injection are called $\alpha_1, \alpha_2$ and the new columns called $\beta_1, \beta_2$. Further, $i^+$ and $j^+$ are the values of the location of the injection $\iota_{i^+j^+}$.
\end{definition}

\begin{example} Figure \ref{fig:inject} from \cite{GH} depicts an $\iota_{12}$ injection.  

\begin{figure}[H]
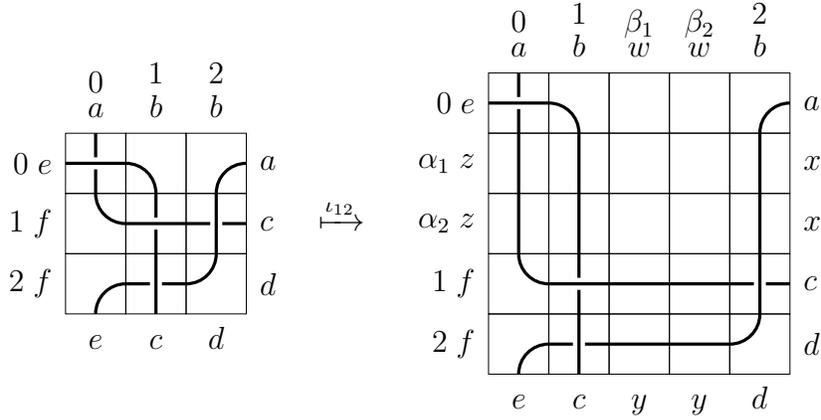

    \centering
\begin{vmosaic}[.8]{3}{3}{{\myoverset{0}{a},\myoverset{1}{b},\myoverset{2}{b}}}{{a,c,d}}{{d,c,e}}{{2\; f,1\;f,0\;e}} 
\tileix \& \tilei \& \tileii \\
\tileiii \& \tileix \& \tilex \\
\tileii \& \tilex \& \tileiv \\
\end{vmosaic}
\meq[1em]{\xmapsto{\iota_{12}}}
\begin{vmosaic}[.8]{5}{5}{{\myoverset{0}{a},\myoverset{1}{b},\myoverset{\beta_1}{w},\myoverset{\beta_2}{w},\myoverset{2}{b} }}{{a,x,x,c,d}}{{d,y,y,c,e}}{{2\;f,1\;f,\alpha_2\;z,\alpha_1\;z,0\;e}} 
\tileix \& \tilei \& \tileo \& \tileo \& \tileii \\
\tilevi \& \tilevi \& \tileo \& \tileo \& \tilevi \\
\tilevi \& \tilevi \& \tileo \& \tileo \& \tilevi \\
\tileiii \& \tileix \& \tilev \& \tilev \& \tilex \\
\tileii \& \tilex \& \tilev \& \tilev \& \tileiv \\
\end{vmosaic}
    \caption{An $\iota_{12}$-injection}
    \label{fig:inject}
\end{figure}

\end{example}

In the rectangular case, we can define \emph{row injections, row ejections, column injections,} and \emph{column ejections}. We define these virtual mosaic moves as follows.

\begin{definition}
A \emph{row injection}, $V_{i\x}$, injects two rows, $\alpha_1, \alpha_2$ after the $i^{th}$ row, $i=0,\dots, m$ and is defined as 

\[
V_{i\x}^{((m + 2) \times n)}=\begin{cases*}
V_{ij}^{(m \times n)} & if $i \neq \alpha_1, \alpha_2$\\
T_6 & if $i = \alpha_1, \alpha_2$ and $V_{i^+-1,j}$ or $ V_{i^+,j}\in\{T_1,T_2,T_6,T_7,T_8,T_9,T_{10}\}$ \\
T_0 & $\vphantom{V_i^{(n)}}$otherwise.
\end{cases*}
\]

\end{definition}

Figure \ref{rinject} gives an example of a $V_{1\x}$ row injection.

\begin{figure}[H]
    \centering
\begin{vmosaic}[.8]{3}{3}{{\myoverset{0}{a},\myoverset{1}{b},\myoverset{2}{b}}}{{a,c,d}}{{d,c,e}}{{2\; f,1\;f,0\;e}} 
\tileix \& \tilei \& \tileii \\
\tileiii \& \tileix \& \tilex \\
\tileii \& \tilex \& \tileiv \\
\end{vmosaic}
\meq[1em]{\xmapsto{V_{1x}}}
\begin{vmosaic}[.8]{5}{3}{{\myoverset{0}{a},\myoverset{1}{b},\myoverset{2}{b} }}{{a,x,x,c,d}}{{d,c,e}}{{2\;f,1\;f,\alpha_2\;z,\alpha_1\;z,0\;e}} 
\tileix \& \tilei \& \tileii \\
\tilevi \& \tilevi \& \tilevi \\
\tilevi \& \tilevi \& \tilevi \\
\tileiii \& \tileix \& \tilex \\
\tileii \& \tilex \& \tileiv \\
\end{vmosaic}
    \caption{A $V_{1\x}$-injection}
    \label{rinject}
\end{figure}

\begin{definition}
A \emph{column injection}, $V_{\x j}$, inserts two columns $\beta_1, \beta_2$ after the $j^{th}$ column, $j=0,\dots,n$ is defined as 

\[
V_{\x j}^{(m \times (n + 2))}=\begin{cases*}
V_{ij}^{(m \times n)} & if $j \neq \beta_1, \beta_2$\\
T_5 & if $j = \beta_1, \beta_2$ and $V_{i,j^+-1}$ or $ V_{i,j^+}\in\{T_1,T_2,T_6,T_7,T_8,T_9,T_{10}\}$ \\
T_0 & $\vphantom{V_i^{(n)}}$otherwise.
\end{cases*}
\]
\end{definition}

An example of a column injection is shown in Figure \ref{cinject}.

\begin{figure}[H]
    \centering
\begin{vmosaic}[.8]{3}{3}{{\myoverset{0}{a},\myoverset{1}{b},\myoverset{2}{b}}}{{a,c,d}}{{d,c,e}}{{2\; f,1\;f,0\;e}} 
\tileix \& \tilei \& \tileii \\
\tileiii \& \tileix \& \tilex \\
\tileii \& \tilex \& \tileiv \\
\end{vmosaic}
\meq[1em]{\xmapsto{V_{x2}}}
\begin{vmosaic}[.8]{3}{5}{{\myoverset{0}{a},\myoverset{1}{b},\myoverset{\beta_1}{w},\myoverset{\beta_2}{w},\myoverset{2}{b} }}{{a,c,d}}{{d,y,y,c,e}}{{2\;f,1\;f\;z,0\;e}} 
\tileix \& \tilei \& \tileo \& \tileo \& \tileii \\
\tileiii \& \tileix \& \tilev \& \tilev \& \tilex \\
\tileii \& \tilex \& \tilev \& \tilev \& \tileiv \\
\end{vmosaic}
    \caption{A $V_{\x 2}$-injection}
    \label{cinject}
\end{figure}

These injection moves and their inverses preserve virtual knot type. As row and column injections add two new vertices and two new edge pairs, they also preserve genus.

\section{Invariants of Rectangular Mosaics}\label{sec:3}


\begin{definition}
    Given a virtual knot or link $K$, define the \emph{tile number} of $K$, denoted $\tau(K)$ to be the minimal number of tiles needed to represent $K$ on a virtual rectangular mosaic. 
\end{definition}

Immediately, we see that for a virtual knot $K$, $\tau(K) \le m_v(K)^2$. In many cases this inequality is strict. For example, the figure eight knot requires a $3 \x 3$ square mosaic and has $m_v(4_1) = 3.$ However, Figure \ref{fig8rec} shows a presentation of the $4_1$ knot as a $2 \x 3$ virtual mosaic, showing that $\tau(4_1) \le 8$.  

\begin{figure}[H]
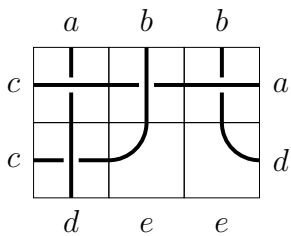

\centering
\begin{vmosaic}[1]{2}{3}{{a,b,b}}{{a,d}}{{e,e,d}}{{c,c}} 
\tileix \& \tilex \& \tileix \\
\tilex \& \tileiv \& \tileiii\\
\end{vmosaic}
\caption{The $4_1$ knot on a $2 \x 3$ virtual mosaic.}
\label{fig8rec}
\end{figure}

We provide a table of virtual rectangular mosaics for all classical knots up to 8 crossings at the end of this manuscript. 

 \subsection{Virtual Row Mosaics}

The classical crossing number $c(K)$ of a virtual knot $K$ provides a lower bound for the tile number. Further, knots with prime classical crossing number $p$ can only realize this minimum if they are able to be presented on a $1 \x p$ rectangular mosaic. 

\begin{definition}
    A \emph{row mosaic} for a virtual knot or link $K$ is a $1 \x n$ virtual rectangular mosaic representing $K$. 
\end{definition}

\begin{proposition}
    The figure eight knot has tile number $\tau(4_1) = 4$, as it can be presented as a $1 \x 4$ row mosaic, as seen in Figure \ref{fig:4_1row}.
\begin{figure}[h]
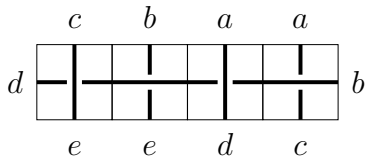

\centering
\begin{vmosaic}[1]{1}{4}{{c,b,a,a}}{{b}}{{c,d,e,e}}{{d}} 
\tilex \& \tileix \& \tilex \& \tileix \\
\end{vmosaic} 
\caption{A row mosaic representation of the $4_1$ knot.}
   \label{fig:4_1row} 
   \end{figure}
\end{proposition}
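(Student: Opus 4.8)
The plan is to prove the equality by a matching-bounds argument, and I would stress at the outset that the lower bound is the easy half. Since, as recorded just before the definition of a row mosaic, the classical crossing number is a lower bound for the tile number, and since every classical crossing of a mosaic closure must be carried by one of the crossing tiles $T_9$ or $T_{10}$ (virtual crossings arise only from interlocking boundary edge pairs and consume no tile), any virtual rectangular mosaic whose closure is $4_1$ contains at least $c(4_1)$ crossing tiles. As $c(4_1)=4$, this yields at least four non-blank tiles, so $\tau(4_1)\ge 4$ immediately.

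For the reverse inequality I would exhibit the $1\times 4$ row mosaic of Figure \ref{fig:4_1row}, whose four tiles $T_{10},T_9,T_{10},T_9$ are all crossing tiles, giving exactly four non-blank tiles and hence $\tau(4_1)\le 4$ once its closure is identified. Two points must be verified. First, that the edge pairing is legitimate and produces a single component: reading the boundary word clockwise from the top-left corner gives $c\,b\,a\,a\,b\,c\,d\,e\,e\,d$, whose pairs $c=(1,6)$, $b=(2,5)$, $a=(3,4)$, $d=(7,10)$, $e=(8,9)$ are completely nested, so there are no interlocking pairs and therefore no virtual crossings; one then checks the traversal closes up into a single loop. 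Second, that this closure is the figure eight.

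The hard part will be this last identification — confirming the closure is $4_1$ rather than some other diagram with four classical crossings. Because the pairing has no interlocking, the closure has genus $0$ and is an honest planar diagram with four crossings. The cleanest route I would take is to extract the signed Gauss code by tracing the strand through the four crossing tiles and the arcs supplied by the identification, and match it against the standard Gauss code of $4_1$; since the tiles alternate between $T_{10}$ (vertical over) and $T_9$ (horizontal over), I would corroborate this with the quicker observation that the crossings alternate over/under along the strand, so the diagram is alternating, and, being reduced, must be the figure eight, the unique knot admitting a reduced alternating four-crossing diagram. As a final safeguard one can instead reduce the closure to the standard figure-eight diagram using the Reidemeister and surface-isotopy mosaic moves of Section \ref{sec:2}. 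Combining the two bounds gives $\tau(4_1)=4$.
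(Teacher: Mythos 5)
Your proof is correct and follows essentially the same route as the paper, which establishes the proposition simply by citing the fact (stated just before the definition of a row mosaic) that the classical crossing number bounds the tile number from below, so $\tau(4_1)\ge c(4_1)=4$, and exhibiting the $1\times 4$ mosaic of Figure \ref{fig:4_1row} for the upper bound. The extra verifications you supply --- that the boundary word $c\,b\,a\,a\,b\,c\,d\,e\,e\,d$ is completely nested, hence genus zero with no virtual crossings, and that the reduced alternating four-crossing closure must be $4_1$ --- are details the paper leaves implicit, and they check out.
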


\begin{example}
    In Figure \ref{rowex}, we see an example of a row mosaic representing the classical knot $7_1$. Thus, $\tau(7_1) = 7$. 
\end{example}

\begin{figure}[h]
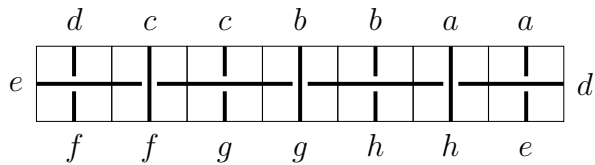

\centering
\begin{vmosaic}
[1]{1}{7}{{d,c,c,b,b,a,a}}{{d}}{{e,h,h,g,g,f,f}}{{e}} 
\tileix \& \tilex \& \tileix \& \tilex \& \tileix \& \tilex \& \tileix \\
\end{vmosaic}
\caption{A row mosaic representation of the $7_1$ knot.}
\label{rowex}
\end{figure}

Representing a virtual knot as a row mosaic requires aligning the classical crossings in a row. A natural question arises as to if every virtual knot $K$ can be represented on a row mosaic. We answer this question in the affirmative utilizing the Gauss code. Recall that for a virtual knot $K$, a Gauss code $\omega_K$ is created by labeling the classical crossings of a virtual knot diagram $1, \dots, n$ and then traversing the diagram, recording the crossing information as a sequence. At each crossing, we indicate if we are traversing the overstrand or the understrand with an $O$ or $U$, followed by the number of the crossing, followed by the sign $+$ or $-$ of the crossing. If the knot is classical, we can omit the over- and under- information, as there will be a unique realization of the Gauss code that omits virtual crossings. 

\begin{figure}[h]
\centering
\includegraphics[scale=.4]{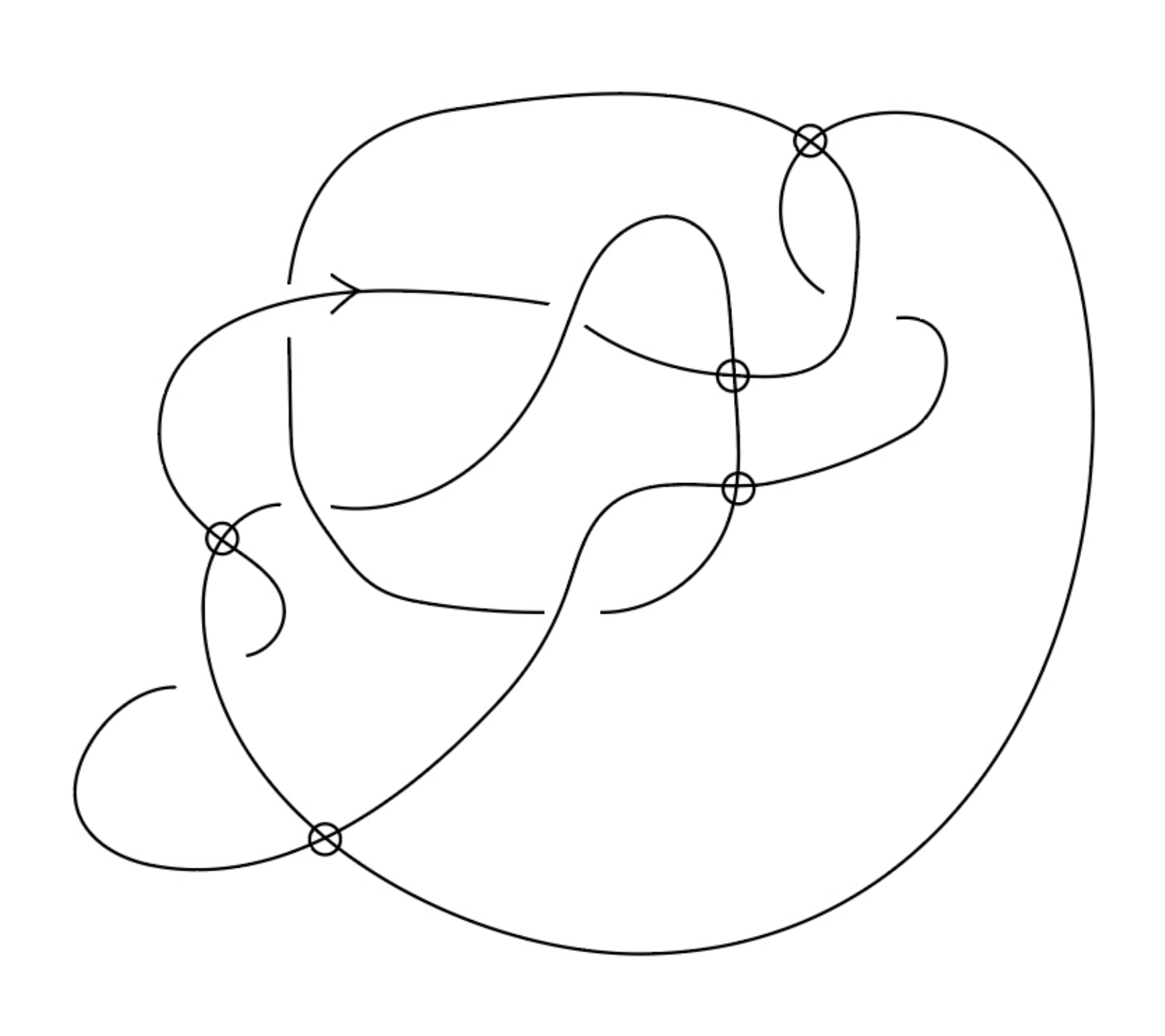}
\caption{An example of a virtual knot $K$ with 6 classical crossings where $M(\omega_K)=5$.}
\label{VK}
\end{figure}

\begin{example}
    The virtual knot $K$ pictured in Figure \ref{VK} has Gauss code given by $$\omega_K=O1-{U2+O3+U1-O4-U5+}O2+U4-O6+U3+O5+U6+.$$
\end{example}

\begin{definition} 
    Given a Gauss code $\omega_K$ for a knot $K$ with $n$ classical crossings, we define $M(\omega_K)$ to be the length of the maximal sequence of nonrepeating terms of $\omega_K$. Thus, for all $\omega$, $M(\omega_K) \le n$. Note that $M(\omega_K)$ is not an invariant of virtual knots.
\end{definition}

\begin{example}
Consider the  virtual knot $K$ in Figure \ref{VK} realizing the Guass code $\omega_K=O1-\textbf{U2+O3+U1-O4-U5+}O2+U4-O6+U3+O5+U6+$. We see that $K$ has 6 classical crossings, while $M(\omega) = 5$. 
\end{example}

\begin{theorem}\label{thm:VirtualRows}
Given a virtual knot $K$, there exists an $n$ such that $K$ can be represented on a $1 \x n$ virtual mosaic.
\end{theorem}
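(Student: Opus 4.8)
The plan is to give an explicit construction of a row mosaic directly from a Gauss code $\omega_K$ of $K$. Write $\omega_K = s_1 s_2 \cdots s_{2n}$, where $n$ is the number of classical crossings and each crossing label occurs exactly twice, carrying its $O/U$ and sign data. The guiding picture is that a row mosaic has a single horizontal ``spine'' running through all of its tiles together with vertical strands exiting through the top and bottom boundary edges; the edge pairing then reconnects these vertical strands, and an interlocking pair of identified edges, as in Figure~\ref{crosspair}, produces exactly a virtual crossing. I would first record the model computation behind Figure~\ref{rowex}: if one places $n$ crossing tiles ($T_9$ or $T_{10}$) in a row, the interior horizontal arc visits every crossing once, while the $n$ vertical arcs, each running from $N_k$ to $S_k$, visit one crossing apiece, so that any edge pairing closes the diagram to a Gauss code of the form $1\,2\,\cdots n$ followed by a permutation of $1,\dots,n$. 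This already realizes every knot whose code admits a length-$n$ nonrepeating window, that is, with $M(\omega_K)=n$ (as for $7_1$), the over/under and sign data being recovered by the choice of $T_9$ versus $T_{10}$ and the orientation.

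For a general Gauss code the two occurrences of a crossing are interleaved with those of other crossings, so the spine cannot traverse all first occurrences before all second occurrences. The key step is therefore to route the ``return trips.'' I would process $\omega_K$ from left to right, placing one crossing tile per crossing at the position of its first occurrence and letting the spine pass horizontally through it. When the traversal reaches a symbol $s_t$ that is the second occurrence of a crossing $k$ whose tile already lies to the left, I reroute the spine up to the top boundary, identify that boundary edge with the top edge of the tile of $k$ (inserting corner tiles $T_3, T_4$ and straight tiles $T_5, T_6$ as needed to keep the row a legal mosaic of suitably connected tiles), pass vertically through the tile of $k$, and return by a second identification along the bottom boundary. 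Choosing $T_9$ versus $T_{10}$ at each crossing tile according to the recorded crossing information guarantees that the closure reproduces $\omega_K$ exactly, hence represents $K$.

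I would then verify that the output is a bona fide virtual rectangular mosaic: every tile is suitably connected, the free ends on the boundary are matched in pairs by the edge identification, and that identification is consistent with a closed orientable surface via the genus formula $g(D)=\tfrac{1-v_D+n+m}{2}$. Because any interleaving of the chords is absorbed by interlocking boundary pairs, which are precisely the virtual crossings, no planarity obstruction arises within the single row, and the construction terminates in a row of finite length $n'$, yielding the desired $1 \times n'$ mosaic.

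I expect the main obstacle to be the bookkeeping of the rerouting step: one must show that the detours to the boundary required by successive second occurrences can always be performed within a single row without forcing two strands into the same tile, and that the resulting boundary word pairs up correctly, with interlocking exactly where the Gauss code demands a virtual crossing. The quantity $M(\omega_K)$ is what governs this, since a maximal nonrepeating block of length $M(\omega_K)$ can be laid down as a clean horizontal segment so that only the remaining symbols incur rerouting; this both bounds the number of auxiliary tiles and shows the process halts. Verifying this packing carefully, rather than the tile-type assignment, which is routine, is the crux of the argument.
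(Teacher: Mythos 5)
Your construction is essentially the paper's own proof: both process the Gauss code left to right, laying down $T_9/T_{10}$ crossing tiles beginning with the maximal nonrepeating block $M(\omega_K)$, and handle each repeated crossing by rerouting the strand to the boundary and reconnecting to the vertical strand of the earlier tile through the edge identification, with interlocking boundary pairs supplying exactly the needed virtual crossings. The only cosmetic difference is the rerouting tile: the paper inserts the double-arc tiles $T_7/T_8$, whose arcs meet all four edges and hence sit suitably connected between crossing tiles in a single row, rather than the corner tiles $T_3,T_4$ you suggest, which lack arc endpoints on two of their edges and could not be placed adjacent to a crossing tile in a $1\times n$ mosaic.
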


\begin{proof}
    We produce an algorithm for representing a virtual knot $K$ on a row mosaic using a Gauss code for $K$ as a guide. First, suppose that $K$ is a virtual knot with $n$ classical crossings and Gauss code given by $\omega_K = \alpha_1 \pm \alpha_2 \pm \dots \alpha_{2n}$. Further assume that $\omega_K$ begins with a maximal sequence of nonrepeated crossings of length $m \le n$. Then, a row mosaic for $K$ begins with $m$ crossings tiles -  $T_9$ or $T_{10}$ - according to the sign of $\alpha_i$. Following the Gauss code, $\alpha_{m+1}$ is a repeated crossing. Insert a $T_8$ tile as the $m+1^{st}$ tile and label the upper edge of the $T_8$ tile and the connecting arc of the repeated tile as $a$. If $\alpha_{m+2}$ is a repeated crossing, label the edges of the crossing tiles to reflect the connection via an edge identification. If $\alpha_{m+2}$ is not a repeated crossing, insert a $T_9$ or $T_{10}$ tile to realize the new crossing and label the corresponding edges accordingly. Repeat this process until all crossings have been included and all edge identifications realize the given Gauss code. 
\end{proof}

\begin{example}
The algorithm from Theorem \ref{thm:VirtualRows} produces the $1\x 7$ row mosaic shown in Figure \ref{fig: row alg} representing the knot $K$ pictured in Figure \ref{VK}.

\begin{figure}
\hspace{-4in}
\begin{tikzpicture}
\begin{vmosaic}[1]{1}{7}{{a,e,h,b,e,c,g}}{{d}}{{f,a,f,c,g,d,b}}{{h}} 
    \tilepx \& \tilepix \& \tilenx \& \tilenix \& \tilepx \& \tileviio \& \tilepix \\
    \end{vmosaic}
    \node at (-6.8,-.2) {2};
    \node at (-5.8, -.2) {3};
    \node at (-4.8, -.2) {1};
    \node at (-3.8, -.2) {4};
    \node at (-2.8, -.2) {5};
    \node at (-.8, -.2) {6};
\end{tikzpicture}
\caption{A row mosaic for $K$}
\label{fig: row alg}
\end{figure}
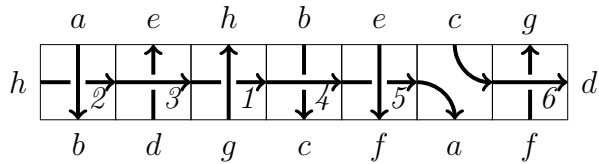
\end{example}

Using this algorithm, we produce a table of virtual row mosaics for all virtual knots with up to 4 classical crossings, included as an appendix.

\begin{definition}
    For a virtual knot $K$, we define the \emph{row number} of $K$, denoted $\rho(K)$, to be the minimal $n$ such that $K$ can be represented on a $1 \x n$ row mosaic.
\end{definition}

We then have the following corollary. 

\begin{corollary}
If a virtual knot $K$ with classical crossing number $c(K)=n$ has a Gauss code $\omega_K$ with maximal sequence $M(\omega_K)=n$, then $n=\tau(K)=\rho(K)$.
\end{corollary}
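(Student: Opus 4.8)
The plan is to establish the three equalities $n = \tau(K) = \rho(K)$ by combining a general lower bound for $\tau(K)$ with the upper bound furnished by the row-mosaic algorithm of Theorem~\ref{thm:VirtualRows}. First I would record the lower bound that every mosaic representation of $K$ must contain at least $c(K) = n$ crossing tiles, since each classical crossing of $K$ requires a distinct tile from $\{T_9, T_{10}\}$ (these are the only tiles realizing a classical crossing), and a mosaic uses at least as many tiles as crossings it displays. Hence $\tau(K) \ge c(K) = n$, and similarly any row mosaic on $1 \x N$ tiles needs $N \ge n$ crossing tiles, so $\rho(K) \ge n$ as well.

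Next I would invoke the hypothesis $M(\omega_K) = n$ directly in the algorithm from the proof of Theorem~\ref{thm:VirtualRows}. Since the maximal nonrepeating sequence has length $n = c(K)$, the algorithm begins by laying down all $n$ crossings as a block of $n$ consecutive crossing tiles ($T_9$ or $T_{10}$ according to sign) with no repeated crossing requiring the insertion of a $T_8$ connector tile. The key observation is that when $M(\omega_K) = n$ equals the total number of classical crossings, the Gauss code traverses each crossing for the first time before revisiting any of them, so the algorithm never needs to insert any non-crossing tiles: the resulting row mosaic is exactly $1 \x n$, consisting solely of crossing tiles. This yields a $1 \x n$ row mosaic for $K$, so $\rho(K) \le n$ and $\tau(K) \le n$.

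Combining the two directions, $n \le \rho(K)$ and $\rho(K) \le n$ give $\rho(K) = n$; similarly $n \le \tau(K) \le n$ gives $\tau(K) = n$, and the chain $n = \tau(K) = \rho(K)$ follows. I expect the main obstacle to be verifying carefully that the algorithm produces no extra tiles when $M(\omega_K) = n$. One must confirm that, after the initial block of $n$ crossing tiles, every remaining term of $\omega_K$ is a second visit to an already-placed crossing and can be accommodated purely through boundary edge identifications, rather than forcing the insertion of a fresh $T_8$ tile or additional connecting tiles. This requires checking that the row closure correctly wires together the two endpoints of each crossing via the edge pairing without adding tiles, which is precisely the situation engineered by the algorithm when the entire code front-loads its distinct crossings.
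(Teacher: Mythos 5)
Your proposal is correct and matches the paper's intended reasoning: the paper states this corollary without a separate proof, as an immediate consequence of the lower bound $c(K)\le\tau(K)$ (asserted at the start of Section~\ref{sec:3}) together with the algorithm of Theorem~\ref{thm:VirtualRows}, which when $M(\omega_K)=n$ produces a $1\x n$ mosaic of crossing tiles only, with all revisits realized by the boundary edge pairing---exactly as in the paper's $4_1$ and $7_1$ examples. Your closing caution is well placed, since the algorithm as literally written inserts a $T_8$ tile at the first repeated crossing; your reading that no such tile is needed when the distinct crossings are front-loaded is the intended one.
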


We can consider classical knots to be the virtual knots that can be represented on a genus 0 thickened surface. However, we note that the algorithm provided in Theorem \ref{thm:VirtualRows}, when applied to a classical knot $K$ does not necessarily produce a classical projection; virtual crossings may arise. 

\begin{example}
Consider the $8_{16}$ knot with Gauss code $$\omega(8_{16})= 1-8+5-6+{\bf 2-1+4-5+6 -7+3} -4+ 8 -2+ 7-3 .$$ Theorem \ref{thm:VirtualRows} gives a row mosaic as pictured in Figure \ref{fig:8-16}, which contains virtual crossings. 
\begin{figure}
    \centering 

\vspace{.5in}
\hspace{-5.5 in}
\begin{tikzpicture}
\begin{vmosaic}[1]{1}{9}{{a,d,i,f,h,c,c,j,f}}{{a}}{{e,i,d,b,g,g,j,e,b}}{{h}} 
    \tileix \& \tilex \& \tileix \& \tilex \& \tileix \& \tilex \& \tileix \& \tileviii \& \tileix \\
    \end{vmosaic}
    \node at (-8.8, -.2) {2};
    \node at (-7.8, -.2) {1};
    \node at (-6.8,-.2) {4};
    \node at (-5.8, -.2) {5};
    \node at (-4.8, -.2) {6};
    \node at (-3.8, -.2) {7};
    \node at (-2.8, -.2) {3};
    \node at (-.8, -.2) {8};
\end{tikzpicture}
\caption{The $8_{16}$ knot as a row mosaic.}
\label{fig:8-16}
\end{figure}
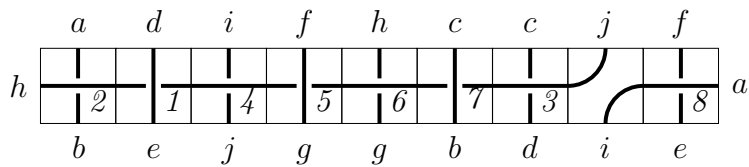
\end{example}

Building on work of Owad \cite{Owad} and Adams, Shinjo, and Tanaka \cite{Adams}, we show that every classical knot can be represented as a genus zero row mosaic. 

\begin{theorem}
    Any classical knot $K$ can be represented on a genus zero virtual row mosaic.
\end{theorem}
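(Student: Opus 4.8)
The plan is to produce a planar diagram of $K$ in which every crossing has already been aligned along a single line, and then read that diagram off directly as a row of crossing tiles. The essential input is the straightening result of Owad \cite{Owad}, building on Adams, Shinjo, and Tanaka \cite{Adams}: every classical knot $K$ admits a \emph{straight} diagram $D$, i.e.\ one possessing a horizontal line $\ell$ meeting $D$ transversally except at the crossings, all of which lie on $\ell$. Equivalently, the arcs of $D$ lying strictly above $\ell$ and those lying strictly below $\ell$ are each a family of pairwise non-crossing embedded arcs in the respective open half-plane. This is exactly the combinatorial shape a $1\x n$ row mosaic can carry: the crossings sit in a row, and all reconnection happens above and below. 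I would therefore take such a $D$ with $c$ crossings $x_1,\dots,x_c$ ordered along $\ell$, isotope each crossing so that locally one strand is horizontal and one vertical, and record the $i$-th tile as $T_9$ or $T_{10}$ according to which strand is the overstrand at $x_i$.

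Next I would reconnect the tile boundaries using the non-crossing arcs of $D$. The horizontal strands of consecutive tiles meet across shared vertical edges and assemble into the portion of $K$ running along $\ell$, while the vertical strands exit through the top and bottom edges of the row. Each connecting arc of $D$ in the upper half-plane joins two top exits, and I realize it by identifying the corresponding pair of top boundary edges; likewise each lower arc is realized by a pairing of bottom boundary edges. The two ends of the spine along $\ell$ are then closed off by wrapping around through the left and right boundary edges, exactly as in the closure of the $7_1$ row mosaic of Figure \ref{rowex}. Concretely, this amounts to feeding the Gauss code of the straight diagram into the algorithm of Theorem \ref{thm:VirtualRows}; that construction already guarantees a $1\x n$ mosaic whose closure represents $K$, so the only new content is the control of its genus.

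The genus computation is where the straightness hypothesis pays off. Because the upper connecting arcs are pairwise non-crossing, the pairing they induce on the top boundary edges is nested: reading the top edges in order, the paired edges form a system of nested chords and never an interlocking pair $\alpha_i\cdots\alpha_j\cdots\alpha_i\cdots\alpha_j$ of the type pictured in Figure \ref{crosspair}; the same holds along the bottom. Since no interlocking pair occurs, no virtual crossing is forced and the closure is a diagram drawn in the plane. I would make this quantitative by counting the vertices $v_D$ produced by the boundary identification and checking, via the classification of surfaces, that $v_D=n+2$, so that the row-mosaic genus formula $g(D)=\tfrac12(1-v_D+n+1)$ gives $g(D)=0$.

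The hard part will be the passage between the topology of the straight diagram and the combinatorics of the boundary word, and it splits into two points. First, the cited straightening theorems must be invoked in precisely the form used above: that the over/under data can be carried along while keeping \emph{both} families of connecting arcs simultaneously non-crossing, and that the result is a single-component knot rather than a link. Second, and most importantly, the chain ``non-crossing connecting arcs $\Longleftrightarrow$ non-interlocking boundary pairing $\Longleftrightarrow$ genus zero'' must be proved rather than asserted; one must verify that the nested chord systems on the top and bottom, together with the wrap-around closure of the spine, assemble to a sphere and not to a surface with handles, i.e.\ that the vertex count really is $n+2$. Establishing this correspondence cleanly—ideally by exhibiting an explicit planar embedding of the closure that restricts to $D$—is where the real work of the proof lies.
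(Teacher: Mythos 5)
Your proposal is correct and follows essentially the same route as the paper: invoke Owad's straight position (via Adams--Shinjo--Tanaka), feed the resulting Gauss code into the algorithm of Theorem \ref{thm:VirtualRows} to obtain a row mosaic with only $T_9$ and $T_{10}$ tiles, and conclude genus zero because the non-crossing upper and lower connecting arcs force a non-interlocking boundary pairing. The ``hard part'' you defer is exactly the step the paper dispatches briefly: since every boundary edge of such a mosaic carries an arc of the diagram, an interlocking pair $\alpha_i\cdots\alpha_j\cdots\alpha_i\cdots\alpha_j$ would create a virtual crossing as in Figure \ref{crosspair}, which the non-intersecting semicircles of the straight diagram rule out.
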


\begin{proof}
    The existence of a genus zero row mosaic for a classical knot requires aligning the crossings of a classical knot projection.  Work of Owad \cite{Owad} shows that every classical knot can be presented as a \emph{straight knot} which we then place onto a virtual row mosaic using only $T_9$ and $T_{10}$ crossing tiles. The algorithm for doing so relies on the following result, stated as Theorem 1.2 of \cite{Adams}:

    \begin{theorem} Every knot has a projection that can be decomposed into two sub-arcs such that each sub-arc never crosses itself. 
    \end{theorem}

The existence of such a projection dates back to 1960 \cite{Hotz} and was rediscovered in 2007 \cite{Ozawa} before motivating the above Theorem. Owad \cite{Owad} then uses planar isotopy to can ensure that one of the sub-arcs is linear; this configuration aligns all crossings in a row. Such as projection is called a \emph{straight position} for a knot. The non-linear sub-arc then consists of a collections of non-intersecting semi-circles. An example is provided in Figure \ref{arctostraight}.

\begin{figure}[H]
    \centering
     \includegraphics[width=0.8\textwidth]{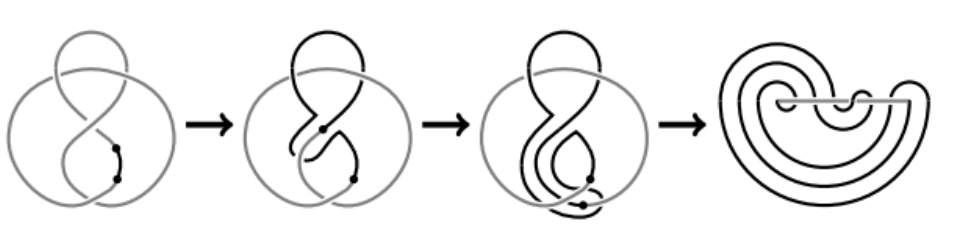} 
    \caption{Isotoping the Figure 8 knot into straight position. Figure from \cite{Owad}}
    \label{arctostraight}
\end{figure}

A knot $K$ in straight position will have a Gauss code $\omega(K)$ in which the maximal sequence of nonrepeated crossings $M(\omega(K))$ is the crossing number of the knot diagram. Thus if a knot is in straight position, our algorithm produces a $1 \x n$ row mosaic for $K$ using only $T_9$ and $T_{10}$ crossing tiles. The boundary edges of the mosaic are then labeled such that each semi-circle of the straight diagram pairs two boundary edges with opposite orientation. 


Recall that surface will only have positive genus if it is the quotient of a polygonal presentation with an interlocking set of edge pairs, i.e. $\alpha_i \cdots \alpha_j \cdots \alpha_i \cdots \alpha_j$. As every edge in a row mosaic for a knot in straight position contains an arc of the knot diagram, a set of interlocking edge pairs would produce a virtual crossing in the row mosaic, as shown in Figure \ref{crosspair}. Thus, we know the edge labeling cannot include an interlocking pair and so the row mosaic has genus zero.

\end{proof}

Putting a knot into straight position may not realize the crossing number. Owad \cite{Owad} gives the following definition. 

\begin{definition}
    A (classical) knot $K$ with crossing number $n$ is called \emph{perfectly straight} if there exists an $n$-crossing projection for $K$ in straight position. 
\end{definition}

This gives the following consequence.
\begin{corollary}
    For a classical knot $K$ with crossing number $n$, if $K$ is perfectly straight, then $c(K)=\tau(K)=\rho(K)=n$.
\end{corollary}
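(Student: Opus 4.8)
The plan is to assemble the desired chain of equalities from results already established in this section, treating the corollary as an immediate consequence of the preceding theorem together with the definitions of the two invariants and the standard crossing-number lower bound. First I would recall the relevant lower bound: for any virtual knot, the classical crossing number $c(K)$ bounds the tile number from below, since a mosaic for $K$ must contain at least $c(K)$ crossing tiles. In fact, for a classical knot the crossings must also be realized on any row mosaic, so $c(K) \le \tau(K) \le \rho(K)$, because a $1 \times n$ row mosaic is in particular a virtual rectangular mosaic with $n$ tiles, so $\rho(K)$ always dominates $\tau(K)$.

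Next I would invoke the hypothesis that $K$ is perfectly straight. By definition this supplies an $n$-crossing projection of $K$ in straight position, where $n = c(K)$. Applying the construction from the preceding theorem (the genus-zero row mosaic theorem) to this particular projection produces a $1 \times n$ row mosaic for $K$ using only $T_9$ and $T_{10}$ crossing tiles, since a knot in straight position has $M(\omega(K)) = c(K)$, and the algorithm of Theorem \ref{thm:VirtualRows} then aligns exactly $n$ crossings in a single row with no additional tiles needed. This exhibits a row mosaic with exactly $n$ tiles, so $\rho(K) \le n$.

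Combining the two directions, $n = c(K) \le \tau(K) \le \rho(K) \le n$, forces equality throughout, giving $c(K) = \tau(K) = \rho(K) = n$. The only point requiring care — and the step I expect to be the main (though minor) obstacle — is verifying that the row mosaic produced really uses exactly $n$ tiles with no blank or auxiliary tiles inflating the count; this is where perfect straightness is essential, since straight position without the perfect hypothesis need not realize the crossing number and the algorithm could then require more than $c(K)$ tiles. I would therefore emphasize that, because the projection is simultaneously straight and minimal-crossing, each of the $n$ tiles is a single crossing tile and the construction introduces no virtual crossings (the genus is zero by the previous theorem), so the tile count coincides exactly with $c(K)$.
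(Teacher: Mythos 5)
Your proof is correct and matches the paper's (implicit) argument exactly: the paper treats this corollary as an immediate consequence of the genus-zero row mosaic theorem, whose algorithm applied to an $n$-crossing straight-position projection yields a $1 \times n$ row mosaic of crossing tiles only, combined with the previously noted chain $c(K) \le \tau(K) \le \rho(K)$. Your sandwich $n = c(K) \le \tau(K) \le \rho(K) \le n$ is precisely the intended reasoning.
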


Several families of knots are perfectly straight, including torus knots $T_{p,q}$, $n$-pretzel knots, classical knots with crossing number up to 7, and 2-Bridge knots $K_{p/q}$ where the continued fraction of $p/q$ has length less than 6 \cite{Owad}. However, several perfectly straight knots can realize their tile number in both a row mosaic and a rectangular mosaic with more than one row. The $6_3$ knot is an example of such; both representations are shown in Figure \ref{roweqtile}. We include as an appendix a table of all alternating classical knots that can be realized on a $2 \x 3$ mosaic and all knots that can be realized on a $2 \x 4$ mosaic; these were constructed combinatorially using Python. Knots $3_1, 4_1, 5_2$, and $6_3$ are the only classical knots that can be realized as a $2 \x 3$ mosaic, while $5_1, 6_1, 6_2, 7_4, 7_5, 7_6, 7_7,$ and $8_{15}$ are the alternating classical knots that can be realized as a $2 \x 4$ mosaic. Thus, the $8_{15}$ knot is the only other alternating knot with fewer than 9 crossings whose tile number can be realized on mosaics of two different dimensions.

\begin{figure}[H]
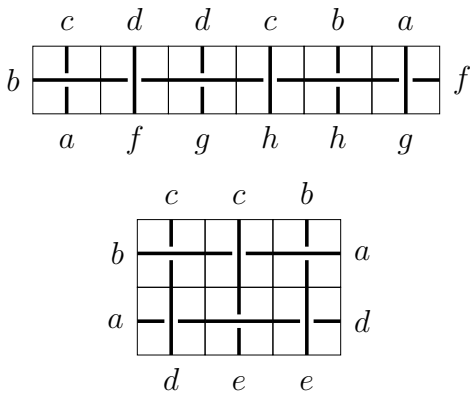

\centering
\begin{vmosaic}[.9]{1}{6}{{c,d,d,c,b,a}}{{f}}{{g,h,h,g,f,a}}{{b}} 
\tileix \& \tilex \& \tileix \& \tilex \& \tileix \& \tilex \\
\end{vmosaic}\\
\begin{vmosaic}[.9]{2}{3}{{c,c,b}}{{a,d}}{{e,e,d}}{{a,b}} 
\tileix \& \tilex \& \tileix\\
\tilex \& \tileix \& \tilex \\
\end{vmosaic}\\
\caption{The $6_3$ knot represented on a row mosaic and rectangular mosaic.}
\label{roweqtile}
\end{figure}

A natural question to ask is if the tile number of a virtual knot will ever be strictly less than the row number. The first such example is the $10_{88}$ knot, whose row  number is 12, as determined by \cite{Owad}, but can be realized on a $2 \x 5$ rectangular mosaic as shown in Figure \ref{rowneqtile}.

\begin{figure}[H]
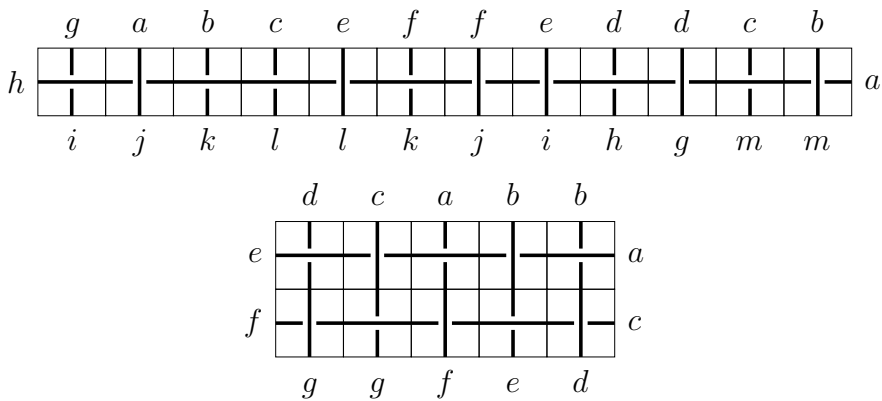

\centering
\begin{vmosaic}[.9]{1}{12}{{g,a,b,c,e,f,f,e,d,d,c,b}}{{a}}{{m,m,g,h,i,j,k,l,l,k,j,i}}{{h}} 
\tileix \& \tilex \& \tileix \& \tileix \& \tilex \& \tileix \& \tilex \& \tilex \& \tileix \& \tilex \& \tileix \& \tilex \\
\end{vmosaic}

\begin{vmosaic}[.9]{2}{5}{{d,c,a,b,b}}{{a,c}}{{d,e,f,g,g}}{{f,e}}
\tileix \& \tilex \& \tileix \& \tilex \& \tileix\\
\tilex \& \tileix \& \tilex \& \tileix \& \tilex\\
\end{vmosaic}
\caption{The $10_{88}$ knot on a $1 \x 12$ and $2 \times 5$ mosaic.}
\label{rowneqtile}
\end{figure}


\section{Virtual Polynomial Invariants}\label{sec:4}

Presenting virtual knots as row mosaics yields several computational benefits, particularly in the world of polynomial computations. Several virtual knot invariants are computed based on algorithms involving assigning \emph{weights} to classical crossings of virtual knot diagrams and constructing polynomial invariants \cite{Cheng, Henrich, Kauffman, Higa}. These include the \emph{Affine Index Polynomial} \cite{Kauffman}, the \emph{writhe polymomial} \cite{ChengGao}, a family of \emph{Intersection Polynomials} \cite{Higa}, the $F$-\emph{polynomial} \cite{Kaur}, the \emph{Zero-polynomial}\cite{zero}, and more. We discuss the construction of the \emph{intersection index polynomial} as presented in \cite{dover} as a representative of these similarly computed families of intersection-based polynomial invariants. 

Given an oriented virtual knot $K$ with a classical crossing $d$, the act of smoothing $d$ produces a 2-component link $K_d$ with components labeled 1 or 2 with a convention of designating component 1 to be on the left when the smoothed crossing is oriented upward as in Figure \ref{smooth}.

\begin{figure}[H]
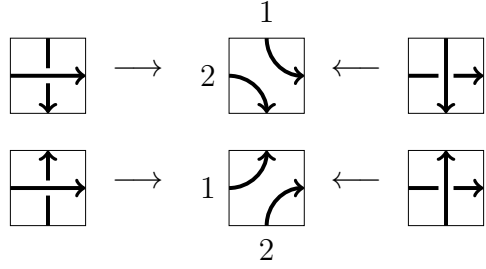

    \centering
    \begin{tabular}{ccccc}
      \begin{vmosaic}[1]{1}{1}{{}}{{}}{{}}{{}} \tilenix\\ 
      \end{vmosaic} &
      $\longrightarrow$ & 
      \begin{vmosaic}[1]{1}{1}{{1}}{{}}{{}}{{2}} \tileviio\\ 
      \end{vmosaic}  & 
      $\longleftarrow$ &
      \begin{vmosaic}[1]{1}{1}{{}}{{}}{{}}{{}} \tilepx\\ 
      \end{vmosaic} \\
       & &  & & \\
      \begin{vmosaic}[1]{1}{1}{{}}{{}}{{}}{{}} \tilepix\\ 
      \end{vmosaic} & 
      $\longrightarrow$ & 
      \begin{vmosaic}[1]{1}{1}{{}}{{}}{{2}}{{1}} \tileviiio\\ 
      \end{vmosaic}  & 
      $\longleftarrow$ &
      \begin{vmosaic}[1]{1}{1}{{}}{{}}{{}}{{}} \tilenx\\ 
      \end{vmosaic} \\
       & &  & & \\
    \end{tabular}
    \caption{Smoothing of a crossing.}
    \label{smooth}
\end{figure}

 Let $C_d$ be the set of classical crossings in the virtual link diagram that involve both components after crossing $d$ is smoothed. Then each crossing $x \in C_d$ is assigned a value $\alpha(x)$, where $\alpha(x)= 1$ if component 1 of $K_d$ passes from left to right when orientations point upward and $\alpha(x) = -1$ if component 2 passes from left to right when orientations point upward. The \emph{the intersection index} corresponding to crossing $d$, denoted $i(d)$, is the sum of the values $\alpha(x)$ for all classical crossings $x \in C_d$. Thus, $i(d) =\displaystyle \sum_{x \in C_d} \alpha(x)$. 

\begin{definition}
The \emph{intersection index polynomial}, $p_t(K)$, as defined in \cite{dover} for a virtual knot $K$ with diagram $D$ is a sum over all classical crossings $d \in D$, where $s(d)$ is the sign of crossing $d$, given as follows:
\begin{center}
   $\displaystyle p_t(K)=\sum_{d \in D} s(d)(t^{|i(d)|}-1)$.
\end{center}
\end{definition}

By orienting a virtual row mosaic in such a way that the horizontal strand is oriented from left to right, and algorithmically smoothing each crossing via a tile replacement as depicted in Figure \ref{soct}, we can compute the intersection index polynomial for the virtual knot. An example is provided below.  

\begin{figure}[H]
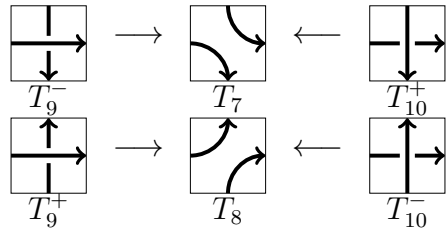

    \centering
    \begin{tabular}{ccccc}
      \begin{vmosaic}[1]{1}{1}{{}}{{}}{{}}{{}} \tilenix\\ 
      \end{vmosaic} &
      $\longrightarrow$ & 
      \begin{vmosaic}[1]{1}{1}{{}}{{}}{{}}{{}} \tileviio\\ 
      \end{vmosaic}  & 
      $\longleftarrow$ &
      \begin{vmosaic}[1]{1}{1}{{}}{{}}{{}}{{}} \tilepx\\ 
      \end{vmosaic} \\
      $T_9^-$ & & $T_7$ & & $T_{10}^+$\\
      \begin{vmosaic}[1]{1}{1}{{}}{{}}{{}}{{}} \tilepix\\ 
      \end{vmosaic} & 
      $\longrightarrow$ & 
      \begin{vmosaic}[1]{1}{1}{{}}{{}}{{}}{{}} \tileviiio\\ 
      \end{vmosaic}  & 
      $\longleftarrow$ &
      \begin{vmosaic}[1]{1}{1}{{}}{{}}{{}}{{}} \tilenx\\ 
      \end{vmosaic} \\
      $T_9^+$  & & $T_8$ & & $T_{10}^-$\\
         \end{tabular}
    \caption{Smoothing oriented crossing tiles.}
    \label{soct}
\end{figure}

 \begin{example}
 Figure \ref{iimosaic} shows a virtual knot $K$ with four classical crossings on a row mosaic as well as the four smoothed diagrams. The resulting intersection index polynomial is given by 
  $$p_t(K) = -t^3-3t+4.$$ 
  
\vspace{-.3in}
\begin{figure}[H]
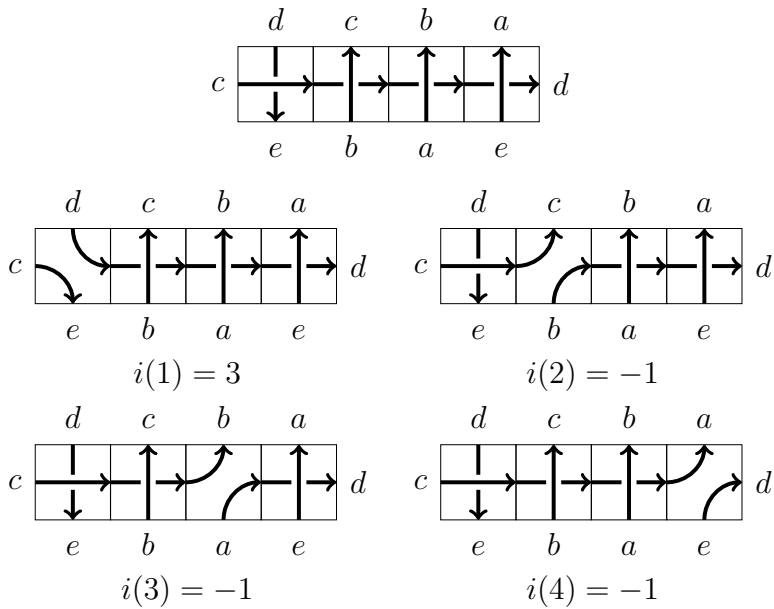

\centering
\begin{vmosaic}[1]{1}{4}{{d,c,b,a}}{{d}}{{e,a,b,e}}{{c}} 
\tilenix \& \tilenx \& \tilenx \& \tilenx\\
\end{vmosaic}

\begin{tabular}{cc}
\begin{vmosaic}[1]{1}{4}{{d,c,b,a}}{{d}}{{e,a,b,e}}{{c}} 
\tileviio \& \tilenx \& \tilenx \& \tilenx\\
\end{vmosaic} &
\begin{vmosaic}[1]{1}{4}{{d,c,b,a}}{{d}}{{e,a,b,e}}{{c}}
\tilenix \& \tileviiio \& \tilenx \& \tilenx\\
\end{vmosaic}\\
$i(1) = 3$ & $i(2) = -1$\\
\begin{vmosaic}[1]{1}{4}{{d,c,b,a}}{{d}}{{e,a,b,e}}{{c}} 
\tilenix \& \tilenx \& \tileviiio \& \tilenx\\
\end{vmosaic} &
\begin{vmosaic}[1]{1}{4}{{d,c,b,a}}{{d}}{{e,a,b,e}}{{c}}
\tilenix \& \tilenx \& \tilenx \& \tileviiio\\
\end{vmosaic}\\
$i(3) = -1$ & $i(4) = -1$\\
\end{tabular}

\caption{Example of computing the intersection index polynomial on a virtual mosaic.}
\label{iimosaic}
\end{figure}
\end{example}

As shown in Figure \ref{cc}, we smooth the first crossing and label component 1 in blue and component 2 in red. The intersection index of the first crossing, $i(1)$, is computed using the crossings that involve both components. The blue component passes from left to right and all crossings are oriented up. Thus, $i(1) = 3$.  

\begin{figure}[H]
\hspace{-3in}
\centering
\begin{tikzpicture}
\begin{vmosaic}[1]{1}{4}{{}}{{}}{{}}{{}}
\tileviioa \& \tilenxa \& \tilenxa \& \tilenxa\\
\end{vmosaic}
\draw[blue,line width=\knotthickness] (-3.5, .45) to[out=90, in=90, looseness=.8] (.5,.5) to[out=-90, in=0, looseness=.8] (0,0);
\draw[red,line width=\knotthickness] (-.5, .5) to[out=90, in=90, looseness=1.5] (.8,.5) to[out=-90, in=-95, looseness=1.5] (-1.5,-.5);
\draw[red,line width=\knotthickness] (-1.5, .5) to[out=90, in=90, looseness=2] (1.2,.5) to[out=-90, in=-90, looseness=2] (-2.5,-.5);
\draw[red,line width=\knotthickness] (-3.5,-.45) to[out=-90, in=-90, looseness=.8] (-.5,-.45);
\draw[red,line width=\knotthickness] (-4,0) to[out=-180, in=-90, looseness=.8] (-4.5,.5) to[out=90, in=90, looseness=.8] (-2.5,.45)  ;
\end{tikzpicture}
\caption{A color coded smoothing of the first crossing.}
\label{cc}
\end{figure}
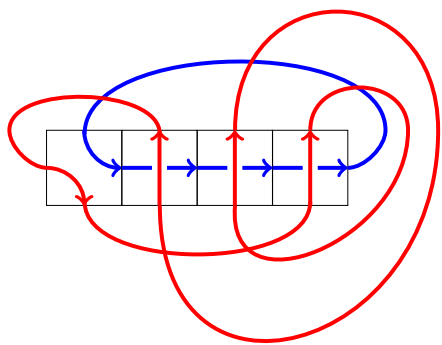

By systematically smoothing the classical crossings from left to right in a virtual row mosaic, we can hope to make these complex computations more tractable. The second author has written Python code to compute the intersection index polynomial for row mosaics with only crossing tiles, available at:\\ \texttt{https://github.com/rmeye23/Intersection-Index-Polynomial-Row-Mosaics.}

There exist nontrivial virtual knots whose crossings all have intersection index zero; these knots have trivial intersection index polynomial. Further, a crossing with intersection index zero retains its index when switching the under and over strands. This fact is of interest to those studying the Cosmetic Crossing conjecture, which hypothesizes that there are no non-nugatory crossing changes that preserve knot type. This conjecture has been proven true for several families of knots, but those with trivial intersection index polynomial remain unknown. Properties of virtual row mosaics may be useful in finding families of virtual knots with trivial intersection index polynomial. Figure \ref{zeroex} shows two nontrivial virtual knots with similar patterns; all crossings have intersection index zero. This pattern can be iterated to produce a family of nontrivial virtual knots with vanishing intersection index polynomial. 

\begin{figure}[H]
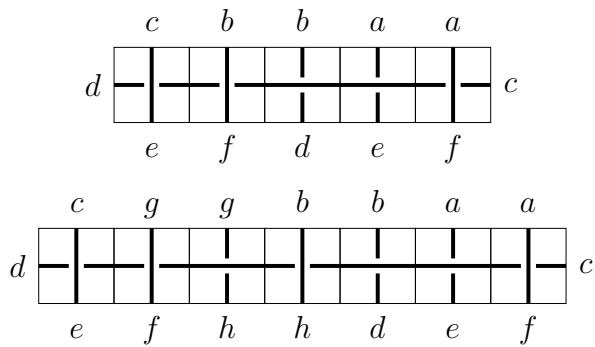

\centering
\begin{vmosaic}[1]{1}{5}{{c,b,b,a,a}}{{c}}{{f,e,d,f,e}}{{d}} 
\tilex \& \tilex \& \tileix \& \tileix \& \tilex \\
\end{vmosaic}

\begin{vmosaic}[1]{1}{7}{{c,g,g,b,b,a,a}}{{c}}{{f,e,d,h,h,f,e}}{{d}} 
\tilex \& \tilex \& \tileix \& \tilex \& \tileix \& \tileix \& \tilex\\
\end{vmosaic}

\caption{Virtual knots whose crossings are all weight zero and are not the unknot.}
\label{zeroex}
\end{figure}

 \section{Open Questions}\label{sec:5}
Opening the study of virtual mosaic knot theory to the rectangular setting introduces a number of interesting questions for further research. We propose a several here:
\begin{enumerate}
    \item Does every virtual knot have a rectangular mosaic that realizes both tile number and genus? We know this is true for virtual knots with four crossings or less. 
    \item Knots with prime classical crossing number necessarily have tile number equal to their row number. We have presented the $10_{88}$ knot as an example of a classical knot with row number greater than its tile number. What characteristics are necessary to guarantee that a knot $K$ will have $\rho(K) > \tau(K)$? Find a family of such knots.
    \item Based on our algorithm in Theorem \ref{thm:VirtualRows}, we can use a Gauss code for a virtual knot $K$ to determine an upper bound for the row number, $\rho(K)$. We can ask how efficient this algorithm is. If $K$ is a virtual knot with classical crossing number $n$, does there exist a constant $a_n$ such that $n \le \rho(K) \le n+a_n$?
    \item Similarly, can we find a bound for the tile number of a virtual knot based on its classical crossing number? Given a virtual knot $K$ with classical crossing number $n$, does there exist a constant $b_n$ such that $n \le \tau(K) \le n+b_n$?
    \item Identify other families of virtual knots with vanishing intersection index polynomials. Can the index of a crossing be easily computed from the row mosaic? 
\end{enumerate}

 \section{Table of Row Mosaics of Knots $3_1$ to $8_{21}$}

\label{minimal-mosaics}
\noindent
\resizebox{\textwidth}{!}{%
\begin{tabular}{ccc}
\begin{vmosaic}[1]{1}{3}{{b,a,a}}{{b}}{{c,d,d}}{{c}} 
\tileix \& \tilex \& \tileix \\
\end{vmosaic} &  
\begin{vmosaic}[1]{1}{4}{{c,b,a,a}}{{b}}{{c,d,e,e}}{{d}} 
\tilex \& \tileix \& \tilex \& \tileix \\
\end{vmosaic} &
\begin{vmosaic}[1]{1}{5}{{b,c,c,a,a}}{{b}}{{d,f,f,e,e}}{{d}} 
\tilex \& \tileix \& \tilex \& \tileix \& \tilex \\
\end{vmosaic} \\
$3_1$ & $4_1$ & $5_1$ 
\end{tabular}
}
\vspace{.2 in}

\medskip\noindent
\resizebox{\textwidth}{!}{%
\begin{tabular}{cc}
\begin{vmosaic}[1]{1}{5}{{c,a,b,b,a}}{{c}}{{f,f,d,e,e}}{{d}} 
\tilex \& \tileix \& \tilex \& \tileix \& \tilex \\
\end{vmosaic} &
\begin{vmosaic}[1]{1}{6}{{d,c,b,b,a,a}}{{c}}{{d,g,g,e,f,f}}{{e}} 
\tilex \& \tileix \& \tilex \& \tileix \& \tilex \& \tileix \\
\end{vmosaic}\\
$5_2$ & $6_1$
\end{tabular}
}
\vspace{.2 in}

\medskip\noindent
\resizebox{\textwidth}{!}{%
\begin{tabular}{cc}
\begin{vmosaic}[1]{1}{6}{{d,c,a,b,b,a}}{{c}}{{d,e,g,g,f,f,}}{{e}} 
\tileix \& \tilex \& \tileix \& \tilex \& \tileix \& \tilex \\
\end{vmosaic} &
\begin{vmosaic}[1]{1}{6}{{c,d,d,c,b,a}}{{f}}{{g,h,h,g,f,a}}{{b}} 
\tileix \& \tilex \& \tileix \& \tilex \& \tileix \& \tilex \\
\end{vmosaic}\\
$6_2$ & $6_3$
\end{tabular}
}
\vspace{.2 in}

\medskip\noindent
\resizebox{\textwidth}{!}{%
\begin{tabular}{cc}
\begin{vmosaic}[1]{1}{7}{{d,c,c,b,b,a,a}}{{d}}{{e,h,h,g,g,f,f}}{{e}} 
\tileix \& \tilex \& \tileix \& \tilex \& \tileix \& \tilex \& \tileix \\
\end{vmosaic} &
\begin{vmosaic}[1]{1}{7}{{d,a,c,c,b,b,a}}{{d}}{{h,h,g,g,e,f,f}}{{e}} 
\tilex \& \tileix \& \tilex \& \tileix \& \tilex \& \tileix \& \tilex \\
\end{vmosaic}\\
$7_1$ & $7_2$
\end{tabular}
}
\vspace{.2 in}

\medskip\noindent
\resizebox{\textwidth}{!}{%
\begin{tabular}{cc}
\begin{vmosaic}[1]{1}{7}{{d,d,c,b,b,a,a}}{{c}}{{e,h,h,f,g,g,f}}{{e}} 
\tilex \& \tileix \& \tilex \& \tileix \& \tilex \& \tileix \& \tilex \\
\end{vmosaic} &
\begin{vmosaic}[1]{1}{7}{{d,a,c,c,b,b,a}}{{d}}{{h,h,e,f,g,g,f}}{{e}} 
\tilex \& \tileix \& \tilex \& \tileix \& \tilex \& \tileix \& \tilex \\
\end{vmosaic}\\
$7_3$ & $7_4$
\end{tabular}
}
\vspace{.2 in}

\medskip\noindent
\resizebox{\textwidth}{!}{%
\begin{tabular}{cc}
\begin{vmosaic}[1]{1}{7}{{d,a,b,c,c,b,a}}{{d}}{{h,h,e,g,g,f,f}}{{e}} 
\tileix \& \tilex \& \tileix \& \tilex \& \tileix \& \tilex \& \tileix \\
\end{vmosaic} &
\begin{vmosaic}[1]{1}{7}{{e,e,d,c,b,a,a}}{{b}}{{c,f,g,h,h,g,f}}{{d}} 
\tileix \& \tilex \& \tileix \& \tilex \& \tileix \& \tilex \& \tileix \\
\end{vmosaic}\\
$7_5$ & $7_6$
\end{tabular}
}
\vspace{.2 in}

\medskip\noindent
\resizebox{\textwidth}{!}{%
\begin{tabular}{cc}
\begin{vmosaic}[1]{1}{7}{{e,d,c,a,b,b,a}}{{c}}{{d,e,f,g,h,h,g}}{{f}} 
\tilex \& \tileix \& \tilex \& \tileix \& \tilex \& \tileix \& \tilex \\
\end{vmosaic} &
\begin{vmosaic}[1]{1}{8}{{e,d,c,c,b,b,a,a}}{{d}}{{e,i,i,h,h,f,g,g}}{{f}} 
\tilex \& \tileix \& \tilex \& \tileix \& \tilex \& \tileix \& \tilex \& \tileix \\
\end{vmosaic}\\
$7_7$ & $8_1$
\end{tabular}
}
\vspace{.2 in}

\medskip\noindent
\resizebox{\textwidth}{!}{%
\begin{tabular}{cc}
\begin{vmosaic}[1]{1}{8}{{e,e,d,d,c,b,a,a}}{{b}}{{c,f,g,i,i,h,h,g}}{{f}} 
\tilex \& \tileix \& \tilex\& \tileix \& \tilex \& \tileix \& \tilex \& \tileix \\
\end{vmosaic} &
\begin{vmosaic}[1]{1}{8}{{e,d,d,c,b,b,a,a}}{{c}}{{e,i,i,f,h,h,g,g}}{{f}} 
\tilex \& \tileix \& \tilex \& \tileix \& \tilex \& \tileix \& \tilex \& \tileix \\
\end{vmosaic}\\
$8_2$ & $8_3$
\end{tabular}
}
\vspace{.2 in}

\medskip\noindent
\resizebox{\textwidth}{!}{%
\begin{tabular}{cc}
\begin{vmosaic}[1]{1}{8}{{e,e,d,c,b,b,a,a}}{{c}}{{d,i,i,f,g,h,h,g}}{{f}} 
\tileix \& \tilex \& \tileix\& \tilex \& \tileix \& \tilex \& \tileix \& \tilex \\
\end{vmosaic} &
\begin{vmosaic}[1]{1}{8}{{e,d,a,c,c,b,b,a}}{{d}}{{i,i,e,f,h,h,g,g}}{{f}} 
\tileix \& \tilex \& \tileix\& \tilex \& \tileix \& \tilex \& \tileix \& \tilex \\
\end{vmosaic}\\
$8_4$ & $8_5$
\end{tabular}
}
\vspace{.2 in}

\medskip\noindent
\resizebox{\textwidth}{!}{%
\begin{tabular}{cc}
\begin{vmosaic}[1]{1}{8}{{e,d,d,c,b,b,a,a}}{{c}}{{e,f,h,i,i,h,g,g}}{{f}} 
\tileix \& \tilex \& \tileix\& \tilex \& \tileix \& \tilex \& \tileix \& \tilex \\
\end{vmosaic} &
\begin{vmosaic}[1]{1}{8}{{e,d,c,c,a,b,b,a}}{{d}}{{e,f,g,i,i,h,h,g}}{{f}} 
\tileix \& \tilex \& \tileix \& \tilex \& \tileix \& \tilex \& \tileix \& \tilex \\
\end{vmosaic}\\
$8_6$ & $8_7$
\end{tabular}
}
\vspace{.2 in}

\medskip\noindent
\resizebox{\textwidth}{!}{%
\begin{tabular}{cc}
\begin{vmosaic}[1]{1}{8}{{e,e,d,c,a,b,b,a}}{{c}}{{d,f,g,i,i,h,h,g}}{{f}} 
\tilex \& \tileix \& \tilex \& \tileix \& \tilex \& \tileix \& \tilex \& \tileix \\
\end{vmosaic} &
\begin{vmosaic}[1]{1}{8}{{e,e,d,c,a,b,b,a}}{{c}}{{d,f,i,i,g,h,h,g}}{{f}} 
\tilex \& \tileix \& \tilex \& \tileix \& \tilex \& \tileix \& \tilex \& \tileix \\
\end{vmosaic}\\
$8_8$ & $8_9$
\end{tabular}
}
\vspace{.2 in}

\medskip\noindent
\resizebox{\textwidth}{!}{%
\begin{tabular}{cc}
\begin{vmosaic}[1]{1}{8}{{e,d,b,c,c,b,a,a}}{{d}}{{e,i,i,f,g,h,h,g}}{{f}} 
\tileix \& \tilex \& \tileix \& \tilex \& \tileix \& \tilex \& \tileix \& \tilex \\
\end{vmosaic} &
\begin{vmosaic}[1]{1}{8}{{d,e,e,d,c,b,a,a}}{{b}}{{c,f,h,i,i,h,g,g}}{{f}} 
\tileix \& \tilex \& \tileix \& \tilex \& \tileix \& \tilex \& \tileix \& \tilex \\
\end{vmosaic}\\
$8_{10}$ & $8_{11}$
\end{tabular}
}
\vspace{.2 in}

\medskip\noindent
\resizebox{\textwidth}{!}{%
\begin{tabular}{cc}
\begin{vmosaic}[1]{1}{8}{{e,c,d,d,c,b,a,a}}{{b}}{{e,h,i,i,h,f,g,g}}{{f}} 
\tileix \& \tilex \& \tileix \& \tilex \& \tileix \& \tilex \& \tileix \& \tilex \\
\end{vmosaic} &
\begin{vmosaic}[1]{1}{8}{{e,d,a,b,c,c,b,a}}{{d}}{{e,i,i,f,g,h,h,g}}{{f}} 
\tilex \& \tileix \& \tilex \& \tileix \& \tilex \& \tileix \& \tilex \& \tileix \\
\end{vmosaic}\\
$8_{12}$ & $8_{13}$
\end{tabular}
}
\vspace{.2 in}

\medskip\noindent
\resizebox{\textwidth}{!}{%
\begin{tabular}{cc}
\begin{vmosaic}[1]{1}{8}{{e,d,a,b,c,c,b,a}}{{d}}{{i,i,e,f,g,h,h,g}}{{f}} 
\tilex \& \tileix \& \tilex \& \tileix \& \tilex \& \tileix \& \tilex \& \tileix \\
\end{vmosaic} &
\begin{vmosaic}[1]{1}{8}{{d,e,e,d,c,b,a,a}}{{b}}{{f,g,h,i,i,h,g,f}}{{c}} 
\tilex \& \tileix \& \tilex \& \tileix \& \tilex \& \tileix \& \tilex \& \tileix \\
\end{vmosaic}\\
$8_{14}$ & $8_{15}$
\end{tabular}
}
\vspace{.2 in}

\medskip\noindent
\resizebox{\textwidth}{!}{%
\begin{tabular}{cc}
\begin{vmosaic}[1]{1}{10}{{f,f,e,d,b,c,c,b,a,a}}{{d}}{{g,k,k,h,i,j,j,i,h,g}}{{e}} 
\tilex \& \tileix \& \tilex \& \tilex \& \tileix \& \tilex \& \tileix \& \tileix \& \tilex \& \tileix \\
\end{vmosaic} &
\begin{vmosaic}[1]{1}{8}{{f,e,d,c,a,b,b,a}}{{c}}{{d,e,f,g,h,i,i,h}}{{g}} 
\tileix \& \tilex \& \tileix \& \tilex \& \tileix \& \tilex \& \tileix \& \tilex \\
\end{vmosaic}\\
$8_{16}$ & $8_{17}$
\end{tabular}
}
\vspace{.2 in}

\medskip\noindent
\resizebox{\textwidth}{!}{%
\begin{tabular}{cc}
\begin{vmosaic}[1]{1}{10}{{f,e,c,d,d,c,b,a,a,b}}{{e}}{{f,g,j,k,k,j,h,i,i,h}}{{g}} 
\tileix \& \tilex \& \tileix \& \tileix \& \tilex \& \tileix \& \tilex \& \tilex \& \tileix \& \tilex\\
\end{vmosaic} &
\begin{vmosaic}[1]{1}{8}{{e,d,a,c,c,b,b,a}}{{d}}{{i,i,e,f,h,h,g,g}}{{f}} 
\tileix \& \tilex \& \tilex \& \tileix \& \tilex \& \tileix \& \tilex \& \tileix \\
\end{vmosaic}\\
$8_{18}$ & $8_{19}$
\end{tabular}
}
\vspace{.2 in}

\medskip\noindent
\resizebox{\textwidth}{!}{%
\begin{tabular}{cc}
\begin{vmosaic}[1]{1}{8}{{e,e,d,c,b,a,a,b}}{{c}}{{f,g,h,i,i,h,g,f}}{{d}} 
\tilex \& \tileix \& \tilex \& \tilex \& \tileix \& \tilex \& \tileix \& \tilex \& \\
\end{vmosaic} &
\begin{vmosaic}[1]{1}{8}{{e,e,d,c,a,b,b,a}}{{c}}{{f,g,h,i,i,h,g,f}}{{d}} 
\tileix \& \tilex \& \tileix \& \tilex \& \tileix \& \tileix \& \tilex \& \tilex\\
\end{vmosaic}\\
$8_{20}$ & $8_{21}$
\end{tabular}
}
\vspace{.2 in}

\section{Table of Alternating Knots that can be presented on $2 \times 3$ and $2 \times 4$ Virtual Mosaics}

\noindent
\resizebox{\textwidth}{!}{%
\begin{tabular}{ccc}
\begin{vmosaic}[1]{2}{3}{{c,b,a}}{{a,b}}{{c,d,e}}{{e,d}} 
\tileix \& \tilex \& \tilei\\
\tileiii \& \tileviii  \& \tilex\\
\end{vmosaic} &
\begin{vmosaic}[1]{2}{3}{{a,b,b}}{{a,d}}{{e,e,d}}{{c,c}} 
\tileix \& \tilex \& \tileix \\
\tilex \& \tileiv \& \tileiii\\
\end{vmosaic} &
\begin{vmosaic}[1]{2}{4}{{d,f,f,e}}{{e,d}}{{c,c,b,b}}{{a,a}} 
\tilex \& \tileviii \& \tilevii \& \tilei\\
\tileix \& \tilex \& \tileix \& \tilex\\
\end{vmosaic} \\
$3_1$ & $4_1$ & $5_1$
\end{tabular}
}
\vspace{.2 in}

\noindent
\resizebox{\textwidth}{!}{%
\begin{tabular}{ccc}
\begin{vmosaic}[1]{2}{3}{{c,b,b}}{{a,a}}{{c,e,e}}{{d,d}} 
\tileix \& \tileviii \& \tileix\\
\tilex \& \tileix  \& \tilex\\
\end{vmosaic} &
\begin{vmosaic}[1]{2}{4}{{a,b,b,a}}{{f,f}}{{c,d,e,e}}{{d,c}} 
\tilevii \& \tilex \& \tileix \& \tilex\\
\tilex \& \tileix \& \tileviii \& \tileix\\
\end{vmosaic} &
\begin{vmosaic}[1]{2}{4}{{c,c,b,a}}{{f,f}}{{e,e,d,d}}{{a,b}} 
\tileix \& \tilex \& \tileix \& \tilex\\
\tileiv \& \tileiii \& \tilex \& \tileix\\
\end{vmosaic} \\
$5_2$ & $6_1$ & $6_2$ 
\end{tabular}
}

\vspace{.2 in}

\noindent
\resizebox{\textwidth}{!}{%
\begin{tabular}{ccc}
\begin{vmosaic}[1]{2}{3}{{c,c,b}}{{a,d}}{{e,e,d}}{{a,b}} 
\tileix \& \tilex \& \tileix\\
\tilex \& \tileix  \& \tilex\\
\end{vmosaic} &
\begin{vmosaic}[1]{2}{4}{{c,b,a,a}}{{f,f}}{{b,c,e,e}}{{d,d}} 
\tileix \& \tileviii \& \tileix \& \tilex\\
\tilex \& \tileix \& \tilex \& \tileix\\
\end{vmosaic} &
\begin{vmosaic}[1]{2}{4}{{b,b,a,a}}{{d,f}}{{f,e,e,d}}{{c,c}} 
\tileix \& \tilex \& \tileix \& \tilex\\
\tilex \& \tileix \& \tilex \& \tileiv\\
\end{vmosaic} \\
$6_3$ & $7_4$ & $7_5$ 
\end{tabular}
}

\vspace{.2 in}

\noindent
\resizebox{\textwidth}{!}{%
\begin{tabular}{ccc}
\begin{vmosaic}[1]{2}{4}{{c,b,a,a}}{{b,c}}{{e,f,f,e}}{{d,d}} 
\tileix \& \tilex \& \tileix \& \tilex\\
\tilex \& \tileix  \& \tilex \& \tilevii\\
\end{vmosaic} &
\begin{vmosaic}[1]{2}{4}{{c,b,a,a}}{{b,c}}{{d,e,f,f}}{{e,d}} 
\tileix \& \tilex \& \tileix \& \tilex\\
\tilex \& \tileix \& \tilex \& \tilevii\\
\end{vmosaic} &
\begin{vmosaic}[1]{2}{4}{{c,b,a,a}}{{b,c}}{{e,f,f,e}}{{d,d}} 
\tileix \& \tilex \& \tileix \& \tilex\\
\tilex \& \tileix \& \tilex \& \tileix\\
\end{vmosaic} \\
$7_6$ & $7_7$ & $8_{15}$ 
\end{tabular}
}

\section{Table of Row Mosaics for Virtual Knots up to 4 Crossings}

\noindent
\resizebox{\textwidth}{!}{%
\begin{tabular}{cccc}
\begin{vmosaic}[1]{1}{2}{{a,b}}{{a}}{{c,b}}{{c}} 
\tileix \& \tileix \\
\end{vmosaic} &
\begin{vmosaic}[1]{1}{3}{{c,b,a}}{{c}}{{d,a,d}}{{b}} 
\tileix \& \tilex \& \tileix \\
\end{vmosaic} &
\begin{vmosaic}[1]{1}{3}{{c,b,a}}{{c}}{{d,d,a}}{{b}} 
\tilex \& \tilex \& \tileix \\
\end{vmosaic} &
\begin{vmosaic}[1]{1}{3}{{c,b,a}}{{c}}{{d,a,d}}{{b}} 
\tileix \& \tilex \& \tilex\\
\end{vmosaic} \\
$2.1,g=1$ & $3.1, g=2$ & $3.2, g=1$ & $3.3, g=2$ 
\end{tabular}
}
\vspace{.2 in}

\noindent
\resizebox{\textwidth}{!}{%
\begin{tabular}{cccc}
\begin{vmosaic}[1]{1}{3}{{c,b,a}}{{d}}{{c,a,d}}{{b}} 
\tileix \& \tilex \& \tilex\\
\end{vmosaic} &
\begin{vmosaic}[1]{1}{3}{{c,b,a}}{{c}}{{d,a,b}}{{d}} 
\tileix \& \tileix \& \tileix \\
\end{vmosaic} &
\begin{vmosaic}[1]{1}{3}{{b,a,a}}{{b}}{{c,d,d}}{{c}} 
\tileix \& \tilex \& \tileix \\
\end{vmosaic} &
\begin{vmosaic}[1]{1}{3}{{c,b,a}}{{c}}{{b,d,d}}{{a}} 
\tileix \& \tilex \& \tileix \\
\end{vmosaic} \\
$3.4, g=2$ & $3.5,g=1$ & $3.6, g=0$ & $3.7, g=1$
\end{tabular}
}

\vspace{.2 in}

\noindent
\resizebox{\textwidth}{!}{%
\begin{tabular}{ccc}
\begin{vmosaic}[1]{1}{4}{{d,c,a,b}}{{a}}{{e,b,d,e}}{{c}} 
\tilex \& \tilex \& \tileix \& \tileix\\
\end{vmosaic} &
\begin{vmosaic}[1]{1}{4}{{e,d,b,a}}{{c}}{{b,c,e,a}}{{d}} 
\tilex \& \tilex \& \tileix \& \tileix\\
\end{vmosaic} &
\begin{vmosaic}[1]{1}{4}{{d,c,b,a}}{{b}}{{e,e,d,a}}{{c}} 
\tilex \& \tilex \& \tileix \& \tilex\\
\end{vmosaic} \\
$4.1 , g=2$ & $4.2 ,g=2$ & $4.3 , g=2$ 
\end{tabular}
}
\vspace{.2 in}

\noindent
\resizebox{\textwidth}{!}{%
\begin{tabular}{ccc}
\begin{vmosaic}[1]{1}{4}{{d,e,a,b}}{{a}}{{c,b,d,c}}{{e}} 
\tilex \& \tilex \& \tileix \& \tilex\\
\end{vmosaic} &
\begin{vmosaic}[1]{1}{4}{{d,c,b,a}}{{b}}{{e,a,d,e}}{{c}} 
\tilex \& \tilex \& \tileix \& \tilex\\
\end{vmosaic} &
\begin{vmosaic}[1]{1}{4}{{d,e,b,b}}{{a}}{{c,a,d,c}}{{e}} 
\tilex \& \tilex \& \tileix \& \tilex\\
\end{vmosaic} \\
$4.4 , g=2$ & $4.5 ,g=2$ & $4.6 , g=2$ 
\end{tabular}
}
\vspace{.2 in}

\noindent
\resizebox{\textwidth}{!}{%
\begin{tabular}{ccc}
\begin{vmosaic}[1]{1}{4}{{d,c,b,a}}{{e}}{{b,e,d,a}}{{c}} 
\tilex \& \tilex \& \tilex \& \tilex\\
\end{vmosaic} &
\begin{vmosaic}[1]{1}{4}{{d,c,b,a}}{{b}}{{e,a,d,e}}{{c}} 
\tilex \& \tilex \& \tilex \& \tilex\\
\end{vmosaic} &
\begin{vmosaic}[1]{1}{4}{{d,c,b,a}}{{c}}{{e,d,a,e}}{{b}} 
\tilex \& \tileix \& \tilex \& \tileix\\
\end{vmosaic} \\
$4.7 , g=2$ & $4.8 ,g=2$ & $4.9 , g=2$ 
\end{tabular}
}
\vspace{.2 in}

\noindent
\resizebox{\textwidth}{!}{%
\begin{tabular}{ccc}
\begin{vmosaic}[1]{1}{4}{{d,c,b,a}}{{c}}{{e,d,e,a}}{{b}} 
\tilex \& \tileix \& \tilex \& \tileix\\
\end{vmosaic} &
\begin{vmosaic}[1]{1}{4}{{c,a,b,a}}{{e}}{{d,c,e,d}}{{b}} 
\tilex \& \tileix \& \tilex \& \tileix\\
\end{vmosaic} &
\begin{vmosaic}[1]{1}{4}{{d,c,b,a}}{{e}}{{c,d,e,a}}{{b}} 
\tilex \& \tileix \& \tilex \& \tileix\\
\end{vmosaic} \\
$4.10 , g=2$ & $4.11 ,g=2$ & $4.12 , g=1$ 
\end{tabular}
}
\vspace{.2 in}

\noindent
\resizebox{\textwidth}{!}{%
\begin{tabular}{ccc}
\begin{vmosaic}[1]{1}{4}{{b,c,b,a}}{{c}}{{e,d,e,a}}{{d}} 
\tilex \& \tileix \& \tilex \& \tileix\\
\end{vmosaic} &
\begin{vmosaic}[1]{1}{4}{{d,b,d,b}}{{a}}{{c,e,a,c}}{{e}} 
\tilex \& \tileix \& \tilex \& \tileix\\
\end{vmosaic} &
\begin{vmosaic}[1]{1}{4}{{d,a,e,c}}{{a}}{{b,d,b,c}}{{e}} 
\tilex \& \tileix \& \tilex \& \tilex\\
\end{vmosaic} \\
$4.13 , g=2$ & $4.14 ,g=2$ & $4.15 , g=2$ 
\end{tabular}
}
\vspace{.2 in}

\noindent
\resizebox{\textwidth}{!}{%
\begin{tabular}{ccc}
\begin{vmosaic}[1]{1}{4}{{d,c,b,a}}{{c}}{{e,d,a,e}}{{b}} 
\tilex \& \tileix \& \tilex \& \tilex\\
\end{vmosaic} &
\begin{vmosaic}[1]{1}{4}{{c,a,b,a}}{{e}}{{d,c,e,d}}{{b}} 
\tilex \& \tileix \& \tilex \& \tilex\\
\end{vmosaic} &
\begin{vmosaic}[1]{1}{4}{{c,b,e,b}}{{a}}{{c,d,a,d}}{{e}} 
\tilex \& \tileix \& \tilex \& \tilex\\
\end{vmosaic} \\
$4.16 , g=2$ & $4.17 ,g=2$ & $4.18 , g=2$ 
\end{tabular}
}
\vspace{.2 in}

\noindent
\resizebox{\textwidth}{!}{%
\begin{tabular}{ccc}
\begin{vmosaic}[1]{1}{4}{{b,c,b,a}}{{c}}{{e,d,a,e}}{{d}} 
\tilex \& \tileix \& \tilex \& \tilex\\
\end{vmosaic} &
\begin{vmosaic}[1]{1}{4}{{b,c,b,a}}{{e}}{{c,d,e,a}}{{d}} 
\tilex \& \tileix \& \tilex \& \tilex\\
\end{vmosaic} &
\begin{vmosaic}[1]{1}{4}{{b,a,b,a}}{{e}}{{d,c,e,d}}{{c}} 
\tilex \& \tileix \& \tilex \& \tilex\\
\end{vmosaic} \\
$4.19 , g=2$ & $4.20 ,g=2$ & $4.21 , g=2$ 
\end{tabular}
}
\vspace{.2 in}

\noindent
\resizebox{\textwidth}{!}{%
\begin{tabular}{ccc}
\begin{vmosaic}[1]{1}{4}{{d,c,b,a}}{{d}}{{b,e,a,e}}{{c}} 
\tileix \& \tilex \& \tileix \& \tileix\\
\end{vmosaic} &
\begin{vmosaic}[1]{1}{4}{{b,c,b,a}}{{d}}{{e,a,e,d}}{{c}} 
\tileix \& \tilex \& \tileix \& \tileix\\
\end{vmosaic} &
\begin{vmosaic}[1]{1}{4}{{d,c,b,a}}{{e}}{{b,d,a,e}}{{c}} 
\tileix \& \tilex \& \tileix \& \tileix\\
\end{vmosaic} \\
$4.22 , g=2$ & $4.23 ,g=2$ & $4.24 , g=2$ 
\end{tabular}
}
\vspace{.2 in}

\noindent
\resizebox{\textwidth}{!}{%
\begin{tabular}{ccc}
\begin{vmosaic}[1]{1}{4}{{d,c,b,a}}{{d}}{{b,e,e,a}}{{c}} 
\tileix \& \tilex \& \tileix \& \tileix\\
\end{vmosaic} &
\begin{vmosaic}[1]{1}{4}{{d,c,b,a}}{{d}}{{e,a,b,e}}{{c}} 
\tileix \& \tilex \& \tileix \& \tileix\\
\end{vmosaic} &
\begin{vmosaic}[1]{1}{4}{{a,c,b,a}}{{d}}{{b,e,e,d}}{{c}} 
\tileix \& \tilex \& \tileix \& \tileix\\
\end{vmosaic} \\
$4.25 , g=2$ & $4.26 ,g=2$ & $4.27 , g=2$ 
\end{tabular}
}
\vspace{.2 in}

\noindent
\resizebox{\textwidth}{!}{%
\begin{tabular}{ccc}
\begin{vmosaic}[1]{1}{4}{{d,c,b,a}}{{e}}{{d,a,b,e}}{{c}} 
\tileix \& \tilex \& \tileix \& \tileix\\
\end{vmosaic} &
\begin{vmosaic}[1]{1}{4}{{d,c,b,a}}{{b}}{{e,d,e,a}}{{c}} 
\tileix \& \tilex \& \tileix \& \tileix\\
\end{vmosaic} &
\begin{vmosaic}[1]{1}{4}{{d,c,b,a}}{{b}}{{e,d,a,e}}{{c}} 
\tileix \& \tilex \& \tileix \& \tileix\\
\end{vmosaic} \\
$4.28 , g=2$ & $4.29 ,g=2$ & $4.30 , g=2$ 
\end{tabular}
}
\vspace{.2 in}

\noindent
\resizebox{\textwidth}{!}{%
\begin{tabular}{ccc}
\begin{vmosaic}[1]{1}{4}{{b,c,b,a}}{{e}}{{d,e,d,a}}{{c}} 
\tileix \& \tilex \& \tileix \& \tileix\\
\end{vmosaic} &
\begin{vmosaic}[1]{1}{4}{{b,c,b,a}}{{e}}{{d,e,a,d}}{{c}} 
\tileix \& \tilex \& \tileix \& \tileix\\
\end{vmosaic} &
\begin{vmosaic}[1]{1}{4}{{b,c,a,b}}{{a}}{{d,e,d,e}}{{c}} 
\tileix \& \tilex \& \tileix \& \tileix\\
\end{vmosaic} \\
$4.31 , g=2$ & $4.32 ,g=2$ & $4.33 , g=2$ 
\end{tabular}
}
\vspace{.2 in}

\noindent
\resizebox{\textwidth}{!}{%
\begin{tabular}{ccc}
\begin{vmosaic}[1]{1}{4}{{d,c,b,a}}{{b}}{{d,e,a,e}}{{c}} 
\tileix \& \tilex \& \tileix \& \tileix\\
\end{vmosaic} &
\begin{vmosaic}[1]{1}{4}{{a,c,b,a}}{{e}}{{d,e,d,b}}{{c}} 
\tileix \& \tilex \& \tileix \& \tileix\\
\end{vmosaic} &
\begin{vmosaic}[1]{1}{4}{{d,c,b,a}}{{e}}{{d,e,a,b}}{{c}} 
\tileix \& \tilex \& \tileix \& \tileix\\
\end{vmosaic} \\
$4.34 , g=2$ & $4.35 ,g=2$ & $4.36 , g=1$ 
\end{tabular}
}
\vspace{.2 in}

\noindent
\resizebox{\textwidth}{!}{%
\begin{tabular}{ccc}
\begin{vmosaic}[1]{1}{4}{{d,c,b,a}}{{d}}{{e,e,a,b}}{{c}} 
\tileix \& \tilex \& \tileix \& \tilex\\
\end{vmosaic} &
\begin{vmosaic}[1]{1}{4}{{d,c,b,a}}{{d}}{{e,a,e,b}}{{c}} 
\tileix \& \tilex \& \tileix \& \tilex\\
\end{vmosaic} &
\begin{vmosaic}[1]{1}{4}{{d,c,b,a}}{{d}}{{b,e,a,e}}{{c}} 
\tileix \& \tilex \& \tileix \& \tilex\\
\end{vmosaic} \\
$4.37 , g=1$ & $4.38 ,g=2$ & $4.39 , g=2$ 
\end{tabular}
}
\vspace{.2 in}

\noindent
\resizebox{\textwidth}{!}{%
\begin{tabular}{ccc}
\begin{vmosaic}[1]{1}{4}{{c,b,a,a}}{{c}}{{e,d,e,d}}{{b}} 
\tileix \& \tilex \& \tileix \& \tilex\\
\end{vmosaic} &
\begin{vmosaic}[1]{1}{4}{{b,c,b,a}}{{d}}{{e,e,a,d}}{{c}} 
\tileix \& \tilex \& \tileix \& \tilex\\
\end{vmosaic} &
\begin{vmosaic}[1]{1}{4}{{d,c,b,a}}{{e}}{{b,d,a,e}}{{c}} 
\tileix \& \tilex \& \tileix \& \tilex\\
\end{vmosaic} \\
$4.40 , g=2$ & $4.41 ,g=2$ & $4.42 , g=2$ 
\end{tabular}
}
\vspace{.2 in}

\noindent
\resizebox{\textwidth}{!}{%
\begin{tabular}{ccc}
\begin{vmosaic}[1]{1}{4}{{c,b,a,a}}{{c}}{{d,e,e,d}}{{b}} 
\tileix \& \tilex \& \tileix \& \tilex\\
\end{vmosaic} &
\begin{vmosaic}[1]{1}{4}{{d,c,b,a}}{{d}}{{b,e,e,a}}{{c}} 
\tileix \& \tilex \& \tileix \& \tilex\\
\end{vmosaic} &
\begin{vmosaic}[1]{1}{4}{{d,c,b,a}}{{d}}{{e,a,b,e}}{{c}} 
\tileix \& \tilex \& \tileix \& \tilex\\
\end{vmosaic} \\
$4.43 , g=1$ & $4.44 ,g=2$ & $4.45 , g=2$ 
\end{tabular}
}

\vspace{.2 in}

\noindent
\resizebox{\textwidth}{!}{%
\begin{tabular}{ccc}
\begin{vmosaic}[1]{1}{4}{{d,c,b,a}}{{d}}{{e,e,b,a}}{{c}} 
\tileix \& \tilex \& \tileix \& \tilex\\
\end{vmosaic} &
\begin{vmosaic}[1]{1}{4}{{d,c,b,a}}{{e}}{{d,a,b,e}}{{c}} 
\tileix \& \tilex \& \tileix \& \tilex\\
\end{vmosaic} &
\begin{vmosaic}[1]{1}{4}{{d,c,b,a}}{{b}}{{e,d,a,e}}{{c}} 
\tileix \& \tilex \& \tileix \& \tilex\\
\end{vmosaic} \\
$4.46 , g=2$ & $4.47 ,g=2$ & $4.48 , g=2$ 
\end{tabular}
}
\vspace{.2 in}

\noindent
\resizebox{\textwidth}{!}{%
\begin{tabular}{ccc}
\begin{vmosaic}[1]{1}{4}{{d,c,b,a}}{{b}}{{e,d,e,a}}{{c}} 
\tileix \& \tilex \& \tileix \& \tilex\\
\end{vmosaic} &
\begin{vmosaic}[1]{1}{4}{{b,c,b,a}}{{e}}{{d,e,a,d}}{{c}} 
\tileix \& \tilex \& \tileix \& \tilex\\
\end{vmosaic} &
\begin{vmosaic}[1]{1}{4}{{b,c,b,a}}{{e}}{{d,e,d,a}}{{c}} 
\tileix \& \tilex \& \tileix \& \tilex\\
\end{vmosaic} \\
$4.49 , g=2$ & $4.50 ,g=2$ & $4.51 , g=2$ 
\end{tabular}
}
\vspace{.2 in}

\noindent
\resizebox{\textwidth}{!}{%
\begin{tabular}{ccc}
\begin{vmosaic}[1]{1}{4}{{a,c,b,a}}{{b}}{{e,d,e,d}}{{c}} 
\tileix \& \tilex \& \tileix \& \tilex\\
\end{vmosaic} &
\begin{vmosaic}[1]{1}{4}{{a,c,b,a}}{{b}}{{e,e,d,d}}{{c}} 
\tileix \& \tilex \& \tileix \& \tilex\\
\end{vmosaic} &
\begin{vmosaic}[1]{1}{4}{{d,c,b,a}}{{b}}{{d,a,e,e}}{{c}} 
\tileix \& \tilex \& \tileix \& \tilex\\
\end{vmosaic} \\
$4.52 , g=2$ & $4.53 ,g=1$ & $4.54 , g=2$ 
\end{tabular}
}
 \vspace{.2 in}

\noindent
\resizebox{\textwidth}{!}{%
\begin{tabular}{ccc}
\begin{vmosaic}[1]{1}{4}{{d,c,b,a}}{{b}}{{e,a,d,e}}{{c}} 
\tileix \& \tilex \& \tileix \& \tilex\\
\end{vmosaic} &
\begin{vmosaic}[1]{1}{4}{{d,c,b,a}}{{e}}{{b,e,d,a}}{{c}} 
\tileix \& \tilex \& \tileix \& \tilex\\
\end{vmosaic} &
\begin{vmosaic}[1]{1}{4}{{c,a,b,a}}{{e}}{{d,c,e,d}}{{b}} 
\tilex \& \tilex \& \tilex \& \tilex\\
\end{vmosaic} \\
$4.55 , g=2$ & $4.56 ,g=2$ & $4.57 , g=2$ 
\end{tabular}
}
\vspace{.2 in}

\noindent
\resizebox{\textwidth}{!}{%
\begin{tabular}{ccc}
\begin{vmosaic}[1]{1}{4}{{d,c,b,a}}{{c}}{{e,d,a,e}}{{b}} 
\tilex \& \tilex \& \tilex \& \tilex\\
\end{vmosaic} &
\begin{vmosaic}[1]{1}{4}{{b,a,b,a}}{{e}}{{d,c,e,d}}{{c}} 
\tilex \& \tilex \& \tilex \& \tilex\\
\end{vmosaic} &
\begin{vmosaic}[1]{1}{4}{{b,c,b,a}}{{c}}{{e,d,e,a}}{{d}} 
\tilex \& \tilex \& \tilex \& \tilex\\
\end{vmosaic} \\
$4.58 , g=2$ & $4.59 ,g=2$ & $4.60 , g=2$ 
\end{tabular}
}
\vspace{.2 in}

\noindent
\resizebox{\textwidth}{!}{%
\begin{tabular}{ccc}
\begin{vmosaic}[1]{1}{4}{{c,b,b,a}}{{e}}{{c,d,e,d}}{{a}} 
\tilex \& \tilex \& \tileix \& \tilex\\
\end{vmosaic} &
\begin{vmosaic}[1]{1}{4}{{d,c,b,a}}{{e}}{{d,c,e,b}}{{a}} 
\tilex \& \tilex \& \tileix \& \tilex\\
\end{vmosaic} &
\begin{vmosaic}[1]{1}{4}{{c,b,d,a}}{{c}}{{e,a,e,d}}{{b}} 
\tileix \& \tilex \& \tilex \& \tileix\\
\end{vmosaic} \\
$4.61 , g=2$ & $4.62 ,g=2$ & $4.63 , g=2$ 
\end{tabular}
}
\vspace{.2 in}

\noindent
\resizebox{\textwidth}{!}{%
\begin{tabular}{ccc}
\begin{vmosaic}[1]{1}{4}{{d,c,b,a}}{{c}}{{d,e,e,b}}{{a}} 
\tilex \& \tilex \& \tileix \& \tilex\\
\end{vmosaic} &
\begin{vmosaic}[1]{1}{4}{{c,b,a,a}}{{d}}{{e,c,e,d}}{{b}} 
\tileix \& \tilex \& \tilex \& \tileix\\
\end{vmosaic} &
\begin{vmosaic}[1]{1}{4}{{d,c,b,a}}{{e}}{{b,d,a,e}}{{c}} 
\tileix \& \tilex \& \tilex \& \tileix\\
\end{vmosaic} \\
$4.64 , g=1$ & $4.65 ,g=1$ & $4.66 , g=2$ 
\end{tabular}
}
\vspace{.2 in}

\noindent
\resizebox{\textwidth}{!}{%
\begin{tabular}{ccc}
\begin{vmosaic}[1]{1}{4}{{b,c,b,a}}{{d}}{{e,a,e,d}}{{c}} 
\tileix \& \tilex \& \tilex \& \tileix\\
\end{vmosaic} &
\begin{vmosaic}[1]{1}{4}{{b,c,b,a}}{{d}}{{e,e,a,d}}{{c}} 
\tileix \& \tilex \& \tilex \& \tileix\\
\end{vmosaic} &
\begin{vmosaic}[1]{1}{4}{{b,c,b,a}}{{e}}{{d,e,d,a}}{{c}} 
\tileix \& \tilex \& \tilex \& \tileix\\
\end{vmosaic} \\
$4.67 , g=2$ & $4.68 ,g=2$ & $4.69 , g=2$ 
\end{tabular}
}
\vspace{.2 in}

\noindent
\resizebox{\textwidth}{!}{%
\begin{tabular}{ccc}
\begin{vmosaic}[1]{1}{4}{{d,c,b,a}}{{b}}{{e,d,e,a}}{{c}} 
\tileix \& \tilex \& \tilex \& \tileix\\
\end{vmosaic} &
\begin{vmosaic}[1]{1}{4}{{d,c,b,a}}{{b}}{{e,d,a,e}}{{c}} 
\tileix \& \tilex \& \tilex \& \tileix\\
\end{vmosaic} &
\begin{vmosaic}[1]{1}{4}{{b,c,a,b}}{{a}}{{e,d,e,d}}{{c}} 
\tileix \& \tilex \& \tilex \& \tileix\\
\end{vmosaic} \\
$4.70 , g=2$ & $4.71 ,g=2$ & $4.72 , g=2$ 
\end{tabular}
}
\vspace{.2 in}

\noindent
\resizebox{\textwidth}{!}{%
\begin{tabular}{ccc}
\begin{vmosaic}[1]{1}{4}{{c,b,a,a}}{{e}}{{c,e,d,d}}{{b}} 
\tileix \& \tilex \& \tilex \& \tileix\\
\end{vmosaic} &
\begin{vmosaic}[1]{1}{4}{{a,c,b,a}}{{e}}{{b,e,d,d}}{{c}} 
\tileix \& \tilex \& \tilex \& \tileix\\
\end{vmosaic} &
\begin{vmosaic}[1]{1}{4}{{a,c,b,a}}{{b}}{{e,e,d,d}}{{c}} 
\tileix \& \tilex \& \tilex \& \tileix\\
\end{vmosaic} \\
$4.73 , g=1$ & $4.74 ,g=2$ & $4.75 , g=1$ 
\end{tabular}
}
\vspace{.2 in}

\noindent
\resizebox{\textwidth}{!}{%
\begin{tabular}{ccc}
\begin{vmosaic}[1]{1}{4}{{d,c,b,a}}{{e}}{{b,e,d,a}}{{c}} 
\tileix \& \tilex \& \tilex \& \tileix\\
\end{vmosaic} &
\begin{vmosaic}[1]{1}{4}{{d,c,b,a}}{{b}}{{e,a,d,e}}{{c}} 
\tileix \& \tilex \& \tilex \& \tileix\\
\end{vmosaic} &
\begin{vmosaic}[1]{1}{4}{{d,c,b,a}}{{d}}{{b,e,a,e}}{{c}} 
\tileix \& \tilex \& \tilex \& \tilex\\
\end{vmosaic} \\
$4.76 , g=2$ & $4.77 ,g=2$ & $4.78 , g=2$ 
\end{tabular}
}
\vspace{.2 in}

\noindent
\resizebox{\textwidth}{!}{%
\begin{tabular}{ccc}
\begin{vmosaic}[1]{1}{4}{{d,c,b,a}}{{e}}{{b,d,a,e}}{{c}} 
\tileix \& \tilex \& \tilex \& \tilex\\
\end{vmosaic} &
\begin{vmosaic}[1]{1}{4}{{d,c,b,a}}{{d}}{{e,a,b,e}}{{c}} 
\tileix \& \tilex \& \tilex \& \tilex\\
\end{vmosaic} &
\begin{vmosaic}[1]{1}{4}{{d,c,b,a}}{{e}}{{d,a,b,e}}{{c}} 
\tileix \& \tilex \& \tilex \& \tilex\\
\end{vmosaic} \\
$4.79 , g=2$ & $4.80 ,g=2$ & $4.81 , g=2$ 
\end{tabular}
}
\vspace{.2 in}

\noindent
\resizebox{\textwidth}{!}{%
\begin{tabular}{ccc}
\begin{vmosaic}[1]{1}{4}{{d,c,b,a}}{{d}}{{e,e,a,c}}{{b}} 
\tileix \& \tileix \& \tilex \& \tileix\\
\end{vmosaic} &
\begin{vmosaic}[1]{1}{4}{{d,c,b,a}}{{d}}{{e,a,e,c}}{{b}} 
\tileix \& \tileix \& \tilex \& \tileix\\
\end{vmosaic} &
\begin{vmosaic}[1]{1}{4}{{c,a,b,a}}{{d}}{{e,e,c,d}}{{b}} 
\tileix \& \tileix \& \tilex \& \tileix\\
\end{vmosaic} \\
$4.82 , g=2$ & $4.83 ,g=2$ & $4.84 , g=2$ 
\end{tabular}
}
\vspace{.2 in}

\noindent
\resizebox{\textwidth}{!}{%
\begin{tabular}{ccc}
\begin{vmosaic}[1]{1}{4}{{d,c,b,a}}{{c}}{{e,a,d,e}}{{b}} 
\tileix \& \tileix \& \tilex \& \tileix\\
\end{vmosaic} &
\begin{vmosaic}[1]{1}{4}{{a,c,b,a}}{{d}}{{e,e,d,c}}{{b}} 
\tileix \& \tileix \& \tilex \& \tileix\\
\end{vmosaic} &
\begin{vmosaic}[1]{1}{4}{{d,c,b,a}}{{d}}{{e,a,e,c}}{{b}} 
\tileix \& \tileix \& \tilex \& \tilex\\
\end{vmosaic} \\
$4.85 , g=2$ & $4.86 ,g=1$ & $4.87 , g=2$ 
\end{tabular}
}
\vspace{.2 in}

\noindent
\resizebox{\textwidth}{!}{%
\begin{tabular}{ccc}
\begin{vmosaic}[1]{1}{4}{{d,c,b,a}}{{e}}{{c,a,d,e}}{{b}} 
\tileix \& \tileix \& \tilex \& \tilex\\
\end{vmosaic} &
\begin{vmosaic}[1]{1}{4}{{d,c,b,a}}{{c}}{{e,a,d,e}}{{b}} 
\tileix \& \tileix \& \tilex \& \tilex\\
\end{vmosaic} &
\begin{vmosaic}[1]{1}{4}{{d,c,b,a}}{{e}}{{d,a,e,c}}{{b}} 
\tileix \& \tileix \& \tilex \& \tilex\\
\end{vmosaic} \\
$4.88 , g=2$ & $4.89 ,g=2$ & $4.90 , g=2$ 
\end{tabular}
}
\vspace{.2 in}

\noindent
\resizebox{\textwidth}{!}{%
\begin{tabular}{ccc}
\begin{vmosaic}[1]{1}{4}{{d,c,b,a}}{{d}}{{e,a,b,c}}{{e}} 
\tileix \& \tileix \& \tileix \& \tileix\\
\end{vmosaic} &
\begin{vmosaic}[1]{1}{4}{{d,c,b,a}}{{d}}{{e,c,a,b}}{{e}} 
\tileix \& \tileix \& \tileix \& \tileix\\
\end{vmosaic} &
\begin{vmosaic}[1]{1}{4}{{d,c,b,a}}{{d}}{{e,c,e,b}}{{a}} 
\tileix \& \tileix \& \tileix \& \tileix\\
\end{vmosaic} \\
$4.91 , g=1$ & $4.92 ,g=1$ & $4.93 , g=2$ 
\end{tabular}
}
\vspace{.2 in}

\noindent
\resizebox{\textwidth}{!}{%
\begin{tabular}{ccc}
\begin{vmosaic}[1]{1}{4}{{c,a,b,a}}{{c}}{{e,e,d,d}}{{b}} 
\tileix \& \tilex \& \tilex \& \tileix\\
\end{vmosaic} &
\begin{vmosaic}[1]{1}{4}{{d,c,b,a}}{{d}}{{c,a,e,e}}{{b}} 
\tileix \& \tilex \& \tilex \& \tileix\\
\end{vmosaic} &
\begin{vmosaic}[1]{1}{4}{{d,c,b,a}}{{d}}{{e,e,a,c}}{{b}} 
\tileix \& \tilex \& \tilex \& \tileix\\
\end{vmosaic} \\
$4.94 , g=1$ & $4.95 ,g=1$ & $4.96 , g=2$ 
\end{tabular}
}
\vspace{.2 in}

\noindent
\resizebox{\textwidth}{!}{%
\begin{tabular}{ccc}
\begin{vmosaic}[1]{1}{4}{{d,c,b,a}}{{d}}{{e,a,e,c}}{{b}} 
\tileix \& \tilex \& \tilex \& \tileix\\
\end{vmosaic} &
\begin{vmosaic}[1]{1}{4}{{d,c,b,a}}{{c}}{{e,a,d,e}}{{b}} 
\tileix \& \tilex \& \tilex \& \tileix\\
\end{vmosaic} &
\begin{vmosaic}[1]{1}{4}{{a,c,b,a}}{{c}}{{e,e,d,d}}{{b}} 
\tileix\& \tilex \& \tilex \& \tileix\\
\end{vmosaic} \\
$4.97 , g=2$ & $4.98 ,g=2$ & $4.99 , g=1$ 
\end{tabular}
}
\vspace{.2 in}

\noindent
\resizebox{\textwidth}{!}{%
\begin{tabular}{ccc}
\begin{vmosaic}[1]{1}{4}{{c,b,a,a}}{{c}}{{d,e,e,b}}{{d}} 
\tileix \& \tileix \& \tilex \& \tileix\\
\end{vmosaic} &
\begin{vmosaic}[1]{1}{4}{{d,c,b,a}}{{d}}{{b,e,e,c}}{{a}} 
\tileix \& \tileix \& \tilex \& \tileix\\
\end{vmosaic} &
\begin{vmosaic}[1]{1}{4}{{d,c,b,a}}{{d}}{{e,e,b,c}}{{a}} 
\tileix \& \tileix \& \tilex \& \tileix\\
\end{vmosaic} \\
$4.100 , g=1$ & $4.101 ,g=1$ & $4.102 , g=1$ 
\end{tabular}
}
\vspace{.2 in}

\noindent
\resizebox{\textwidth}{!}{%
\begin{tabular}{ccc}
\begin{vmosaic}[1]{1}{4}{{d,c,b,a}}{{c}}{{b,e,d,e}}{{a}} 
\tileix \& \tileix \& \tilex \& \tileix\\
\end{vmosaic} &
\begin{vmosaic}[1]{1}{4}{{d,c,b,a}}{{c}}{{e,e,d,b}}{{a}} 
\tileix \& \tileix \& \tilex \& \tileix\\
\end{vmosaic} &
\begin{vmosaic}[1]{1}{4}{{a,c,b,a}}{{c}}{{e,e,d,d}}{{b}} 
\tilex \& \tileix \& \tilex \& \tileix\\
\end{vmosaic} \\
$4.103 , g=2$ & $4.104 ,g=1$ & $4.105 , g=1$ 
\end{tabular}
}
\vspace{.2 in}

\noindent
\resizebox{\textwidth}{!}{%
\begin{tabular}{ccc}
\begin{vmosaic}[1]{1}{4}{{c,e,b,a}}{{e}}{{c,a,d,d}}{{b}} 
\tilex \& \tileix \& \tilex \& \tileix\\
\end{vmosaic} &
\begin{vmosaic}[1]{1}{4}{{d,c,b,a}}{{e}}{{d,a,e,c}}{{b}} 
\tilex \& \tileix \& \tilex \& \tileix\\
\end{vmosaic} &
\begin{vmosaic}[1]{1}{4}{{c,c,b,a}}{{d}}{{e,e,d,a}}{{b}} 
\tilex \& \tileix \& \tilex \& \tileix\\
\end{vmosaic} \\
$4.106 , g=1$ & $4.107 ,g=2$ & $4.108 , g=0$ 
\end{tabular}
}

\bibliography{main}  
\bibliographystyle{alpha}

\end{document}